\newtheorem{Def}{Definition}[section]
\newtheorem{Thm}[Def]{Theorem}
\newtheorem{Prop}[Def]{Proposition}
\newtheorem{Rem}[Def]{Remark}
\newtheorem{Cor}[Def]{Corollary}
\newtheorem{Lem}[Def]{Lemma}
\newtheorem{Assm}[Def]{Assumption}
\numberwithin{equation}{section}
\newcommand{\Q}{\mathbb{Q}}
\newcommand{\R}{\mathbb{R}}
\newcommand{\C}{\mathbb{C}}
\newcommand{\Z}{\mathbb{Z}}
\newcommand{\F}{\mathbb{F}}
\newcommand{\hh}{\mathbb{H}}
\newcommand{\sym}{\mathrm{Sym}}
\newcommand{\trn}[1][1]{{}^t \hspace{-#1pt}}
\newcommand{\mat}[4]{\begin{pmatrix} #1 & #2 \\ #3 & #4 \end{pmatrix}}
\newcommand{\smat}[4]{\left(\begin{smallmatrix} #1 & #2 \\ #3 & #4 \end{smallmatrix}\right)}
\newcommand{\h}{\mathbb{H}}
\begin{document}

\title{On $p$-adic Siegel--Eisenstein series from a point of view of the theory of mod $p^m$ singular forms}
\author{Siegfried B\"ocherer and Toshiyuki Kikuta$^*$}
\maketitle

\noindent
{\bf 2020 Mathematics subject classification}: Primary 11F33 $\cdot$ Secondary 11F46\\
\noindent
{\bf Key words}: Siegel--Eisenstein series, genus theta series, $p$-adic Eisenstein series, mod $p^m$ singular. 

\begin{abstract}
We show that the $p$-adic Siegel--Eisenstein series of general degree attached to two kind of number sequences are both linear combinations of genus theta series of level dividing $p$, 
by applying the theory of mod $p$-power singular forms. 
As special cases of this result, we derive the results of Nagaoka and Katsurada--Nagaoka. 
\end{abstract}

\section{Introduction and Main results}
Serre \cite{Se} developed the theory of $p$-adic modular forms. 
In the process, he introduced the notion of $p$-adic Eisenstein series, 
defined as the $p$-adic limit of the Eisenstein series, 
and described some of their interesting properties. 
For example, it is stated that if a sequence of weights converges in a suitable sense, 
then all corresponding $p$-adic Eisenstein series coincide with Hecke's Eisenstein series of level $p$.
Subsequently, some generalization to the case of Siegel--Eisenstein series was first given by Nagaoka \cite{Na}, and later given by several people.
We prepare some notation to explain them in more detail.

Let $p$ be an odd prime. 
We put 
\[{\boldsymbol X}=\Z_p\times \Z/(p-1)\Z. \]
We can regard $\Z\subset {\boldsymbol X}$ via the embedding $n\mapsto (n,\widetilde{n})$ with $\widetilde{n}:=n$ mod $p-1$. 
Let $E_{k}^{(n)}$ be the Siegel--Eisenstein series of degree $n$, weight $k$, level $1$ 
and the constant term $1$. 
For $(k,a)\in {\boldsymbol X}$, the $p$-adic Siegel--Eisenstein series $\widetilde{E}^{(n)}_{(k,a)}$
of degree $n$ and weight $(k,a)$ is defined as 
\[\widetilde{E}^{(n)}_{(k,a)}:=\lim _{m\to \infty }E_{k_m}^{(n)} \quad (p\text{-adic\ limit}) \]
where $k_m$ is a number sequence such that $k_m\to \infty $ ($m\to \infty$) in the usual topology of $\mathbb{R}$ 
and $k_m\to (k,a)$ ($m\to \infty$) in ${\boldsymbol X}$. 
In this paper, we mainly discuss the case when the weights are $(k,k)$ with $k$ even and $(k,k+\frac{p-1}{2})$ 
with condition $k\equiv \frac{p-1}{2}$ mod $2$.

We write down  the known results in chronological order:
\begin{itemize}  
\setlength{\itemsep}{-3pt}
\item
Nagaoka \cite{Na} proved that $\widetilde{E}_{(1,\frac{p+1}{2})}^{(n)}$ is just 
the genus theta series for a binary quadratic form $S$ of level $p$ with $\det(2S)=p$. 
\item
Katsurada--Nagaoka \cite{Kat-Na} proved that 
$\widetilde{E}_{(k,k+\frac{p-1}{2})}^{(2)}$ (with $p>2k$) can be written by a linear combination 
of genus theta series of two quadratic forms of matrix size $2k$ and level $p$ 
and a twisted Siegel--Eisenstein series.
\item 
Mizuno \cite{Mizuno2} showed that $\widetilde{E}_{(k,k+\frac{p-1}{2})}^{(2)}$ (with $p>2k$) can be expressed in terms of the Siegel--Eisenstein series of level $p$.
\item
Nagaoka and the second author \cite{Ki-Na} proved that $\widetilde{E}^{(2)}_{(2,2)}$
coincides with the genus theta series for a quaternary quadratic form $S$ of level $p$ with $\det(2S)=p^2$. 
\item
Mizuno--Nagaoka \cite{Miz-Na} showed the modularity (for level $p$) of $\widetilde{E}_{(k,k)}^{(2)}$ (with $p>2k$, $k\ge 2$) by using the Maass lift of Jacobi forms.
\item
Takemori \cite{Ta} proved that $\widetilde{E}^{(2)}_{(k,a)}$ ($k\ge 4$) is a Siegel--Eisenstein
series of level $p$ with character mod $p$ not necessarily quadratic. 
Actually, he studied the $p$-adic Siegel--Eisenstein series with level. 
\item
Katsurada--Nagaoka \cite{Kat-Na2} showed that the coincidence (in \cite{Ki-Na}) of item 4
holds for $\widetilde{E}^{(n)}_{(2,2)}$. 
\end{itemize}
There is another work by Mizuno \cite{Mizuno} for the case with level between \cite{Mizuno2} and \cite{Ki-Na}.
We should mention that the results for $n\ge 4$ in \cite{Na} and for $n\ge 6$ in \cite{Kat-Na2} should impose a condition 
that some sequences of Bernoulli numbers converge to nonzero values $p$-adically (as in Assumption \ref{Assm:NV}); this condition holds when $p$ is a regular prime. 
The need for such conditions has been confirmed by contacting the authors of the papers.
In fact, \cite{Kat-Na2} has been corrected by \cite{Kat-Na3}.

On the other hand, 
in \cite{Bo-Ki,Bo-Ki2}, the authors defined a notion of mod $p^m$ singular forms
and proved that all mod $p^m$ singular forms (with some condition on $p$-rank) 
can be expressed by linear combinations of theta series.  
In this paper, we study the $p$-adic Siegel--Eisenstein series from a point of view 
of this theory. 

We put ${\boldsymbol k}_0:=(k,k)$ and ${\boldsymbol k}_1:=(k,k+\frac{p-1}{2})$ and denote by $k_j(m)$ 
a number sequence satisfying $k_j(m)\to {\boldsymbol k}_j$ ($m\to \infty$) in ${\boldsymbol X}$.  
Looking at the papers \cite{Na,Kat-Na2}, 
the Fourier coefficients of $\widetilde{E}_{(1,\frac{p+1}{2})}^{(n)}$ and $\widetilde{E}_{(2,2)}^{(n)}$ for the higher ranks are zero. 
Their calculations were for $k=1$, $2$, but the same phenomenon seems to occur for general $k$.
This means, in other words, that some constant multiples of $E_{k_j(m)}^{(n)}$ ($j=0$, $1$) are mod $p^{c(m)}$ singular modular forms for some $c(m)$ satisfying $c(m)\to \infty $ if $m\to \infty$.
Therefore, our structure theorem can be applied to prove the coincidence of the $p$-adic Siegel--Eisenstein series and a linear combination of theta series.
By looking somewhat more precisely, 
we can prove that their levels are $p$ and also that
it is actually a linear combination of genus theta series.

We state our main theorem precisely. 
For simplicity, we assume that $p$ is a regular prime, 
but the actual condition we need is that of Assumption \ref{Assm:NV}. 
Such assumption is always satisfied for regular primes.
\begin{Thm}
\label{thm:main}
Let $n$, $k$ be positive integers and $p$ a regular prime with $p>2k+1$.
Assume that $k$ is even in ${\boldsymbol k}_0$ and 
$k\equiv \frac{p-1}{2}$ mod $2$ in ${\boldsymbol k}_1$. 
Then we have
\begin{align}
\label{eq:lin0}
\widetilde{E}_{{\boldsymbol k}_j}^{(n)}= \sum _{\substack{{\rm gen}(S)\\ \ {\rm level}(S)\mid p \\ \chi _S=\chi _p^j}}a({\rm gen}(S))\cdot (\Theta ^{(n)}_{{\rm gen}(S)})^0,\quad (a({\rm gen}(S))\in \Q), 
\end{align}
where the summation in (\ref{eq:lin0}) goes over all genera ${\rm gen}(S)$ of positive definite quadratic forms $S$ of matrix size $2k$ with level dividing $p$ and $\chi_S=\chi _p^j$ 
(and therefore $\det (2S)=p^s$ with $s\equiv j$ mod $2$ and $0\le s\le 2k$. 
In such formulation, not all $p^s$ must occur).  
Here, the notation is as follows. 
\begin{itemize}  \setlength{\itemsep}{-3pt}
\item $(\Theta ^{(n)}_{{\rm gen}(S)})^0$ is the unnormalized genus theta series defined by 
\[(\Theta ^{(n)}_{{\rm gen}(S)})^0:=\sum _{S\in {\rm gen}(S)}\frac{1}{\epsilon (S)}\theta ^{(n)}_S, \] 
\item
$\theta ^{(n)}_S$ is the usual theta series of degree $n$ attached to $S$ (see (\ref{eq:theta}) in Subsection \ref{subsec:theta}),
\item
$\epsilon (S)$ is the order of automorphism group of $S$, 
\item
$\chi _S$ is the character associated with $S$ (see (\ref{eq:char}) in Subsection \ref{subsec:theta}), 
\item
$\chi _p:=(\frac{*}{p})$ is the unique nontrivial quadratic character mod $p$, and we understand $\chi _p^0$ as the trivial character mod $p$. 
\end{itemize}
\end{Thm}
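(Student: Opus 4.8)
The plan is to deduce the identity from the structure theory of mod $p^m$ singular forms developed in \cite{Bo-Ki,Bo-Ki2}. After fixing a $p$-integral normalization $c_m\in\Q^\times$ of the level-one series $E_{k_j(m)}^{(n)}$ (so that $c_m E_{k_j(m)}^{(n)}$ has $p$-integral Fourier coefficients and $c_m$ converges $p$-adically to a nonzero limit, which is where Assumption \ref{Assm:NV}, hence the regularity of $p$, is used), the goal is to show that $c_m E_{k_j(m)}^{(n)}$ is a mod $p^{c(m)}$ singular form of rank $2k$ with $c(m)\to\infty$. Granting this, the structure theorem identifies it, modulo $p^{c(m)}$, with a linear combination of degree-$n$ theta series attached to positive definite forms of matrix size $2k$; since $c(m)\to\infty$ and these finitely many theta series are fixed holomorphic modular forms, the congruences sharpen in the $p$-adic limit to an exact identity for $\widetilde E_{{\boldsymbol k}_j}^{(n)}$. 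It suffices to carry this out for $n\ge 2k$, where the singularity is substantial and the size-$2k$ theta series are linearly independent; the remaining cases $n<2k$ follow by applying the Siegel $\Phi$-operator, which intertwines $\widetilde E_{{\boldsymbol k}_j}^{(n)}\mapsto \widetilde E_{{\boldsymbol k}_j}^{(n-1)}$ and $(\Theta^{(n)}_{\mathrm{gen}(S)})^0\mapsto (\Theta^{(n-1)}_{\mathrm{gen}(S)})^0$ and leaves the coefficients $a(\mathrm{gen}(S))$ unchanged.

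The analytic core is the singularity estimate: for every $T\ge 0$ with $\rank(T)>2k$ one must show $\ord_p\bigl(c_m\,a_{k_j(m)}(T)\bigr)\to\infty$ as $m\to\infty$. Here I would invoke the explicit formula for the Fourier coefficients of the Siegel--Eisenstein series, writing $a_k(T)$ as a product of local densities times a global factor assembled from values of $\zeta$ and Dirichlet $L$-functions, equivalently from (generalized) Bernoulli numbers. As $k_j(m)\to{\boldsymbol k}_j$ in ${\boldsymbol X}$, Kummer-type congruences among these Bernoulli numbers force the $p$-adic valuation of the rank-$r$ contribution to grow without bound precisely when $r>2k$, the local factor at $p$ staying bounded; the parity hypotheses on $k$ in ${\boldsymbol k}_0$ and ${\boldsymbol k}_1$ are exactly what align the weight with the relevant Bernoulli congruences. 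This is the mechanism, observed for $k=1,2$ in \cite{Na,Kat-Na2}, that forces the high-rank coefficients to vanish in the limit.

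It remains to put the limit into the stated shape. That the combination is one of \emph{genus} theta series follows from Siegel's formula: in the $p$-adic limit the surviving Fourier coefficients are products of local densities, which are exactly the coefficients of the normalized genus theta series of size $2k$, so no finer information than the genus can appear. The character condition $\chi_S=\chi_p^j$ is dictated by the second component $a$ of the weight ${\boldsymbol k}_j=(k,a)$: the value $a-k\equiv 0$ or $\tfrac{p-1}{2}\pmod{p-1}$ selects the trivial or the quadratic character, i.e. $\det(2S)=p^s$ with $s\equiv j\pmod 2$. The level bound $\mathrm{level}(S)\mid p$ comes from the hypothesis $p>2k+1$, which keeps the $p$-adic Jordan constituents of the surviving forms in scales $0$ and $1$ (the ``regular'' range of the $p$-rank condition in the structure theorem). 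Finally, rationality of the $a(\mathrm{gen}(S))$ follows by matching finitely many rational low-rank Fourier coefficients and using the linear independence of the genus theta series in degree $n\ge 2k$.

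The step I expect to be the main obstacle is the singularity estimate of the second paragraph, carried out \emph{uniformly} in $T$ for all $\rank(T)>2k$ at once, rather than for a fixed $T$: one needs a clean, $T$-independent lower bound for the $p$-adic valuation of the global Bernoulli factor in each of the two weight families. Extending the $k=1,2$ computations of \cite{Na,Kat-Na2} to general $k$ is exactly the gap that the mod $p^m$ singular viewpoint is designed to bridge, and the regularity of $p$ together with the parity conditions on $k$ are precisely what make the required congruences available.
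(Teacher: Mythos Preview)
Your overall strategy matches the paper's: normalize $E_{k_j(m)}^{(n)}$ to be $p$-integral, show it is mod $p^{c(m)}$ singular of $p$-rank $2k$ with $c(m)\to\infty$, apply the structure theorem of \cite{Bo-Ki2}, and pass to the $p$-adic limit. The paper also reduces to large degree and descends by $\Phi$, though note that Theorem~\ref{thm:structure} needs $n\ge 2r=4k$, not merely $n\ge 2k$.

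There is, however, a genuine gap in your argument for the level bound. You claim that ``$\mathrm{level}(S)\mid p$ comes from the hypothesis $p>2k+1$, which keeps the $p$-adic Jordan constituents of the surviving forms in scales $0$ and $1$.'' This is not what the structure theorem gives: the condition $p>r+1$ in Theorem~\ref{thm:structure} is only a hypothesis for applicability, and the conclusion is a congruence with theta series of level dividing some unspecified $p^{e}$, with no a priori bound on $e$. In the paper this is exactly Proposition~\ref{prop:weak}, where the level is $p^{e_m}$; showing $e_m=1$ is a separate step. The paper accomplishes it in two independent ways: either by computing the primitive Fourier coefficients $a_{k_j(m)}^{(2k)}(S)^*$ via Theorem~\ref{thm:pri} and checking directly that the local part at $p$ forces $\lim_m c_{k_j(m)}^{(2k)}(S)=0$ whenever $p^2\mid\mathrm{level}(S)$ (Subsection~\ref{local_part}), or by using that $E_{k_j(m)}^{(n)}$ is a $T_n(p)$-eigenform with $p$-unit eigenvalue and that $U(p)$ lowers the level of theta series (Section~\ref{sec:4}). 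Your proposal supplies neither mechanism.

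This gap also undermines your passage to the limit: you assert that ``these finitely many theta series are fixed holomorphic modular forms,'' but until $e_m$ is bounded the set of contributing $S$ is not fixed, and you have not argued that the individual coefficients $c_S^{(m)}$ converge $p$-adically. In the paper both points are settled simultaneously by the explicit analysis of $b_{k_j(m)}^{(2k)}(T)$ and $c_{k_j(m)}^{(2k)}(T)$, which also yields the rationality of $a(\mathrm{gen}(S))$ directly rather than by an a posteriori linear-independence argument. Your appeal to ``Siegel's formula'' to pass from theta series to \emph{genus} theta series is also looser than what the paper does: there the identification $c_S^{(m)}\equiv p^{-\nu_m}a_{k_j(m)}^{(2k)}(S)^*/\epsilon(S)$ comes from the refined Freitag-type expansion, and the genus-dependence is immediate because $a_{k_j(m)}^{(2k)}(S)^*$ depends only on $\mathrm{gen}(S)$.
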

\begin{Rem}
\begin{enumerate} \setlength{\itemsep}{-3pt}
\item
We can prove that the $p$-adic Siegel--Eisenstein series $\widetilde{E}_{{\boldsymbol k}_j}^{(n)}$ does not depend on the choice of number sequence $k_j(m)$
satisfying $k_j(m)\to {\boldsymbol k}_j$ ($m\to \infty $) in ${\boldsymbol X}$. 

\item
We will see that $a({\rm gen}(S))$ is obtained explicitly by taking $p$-adic limit of 
a certain primitive Fourier coefficient of the Siegel--Eisenstein series (see Subsection \ref{local_part}). 

\item
The proof of Theorem \ref{thm:main} will use the result that the mod $p^m$ singular form, introduced later as Theorem \ref{thm:structure}, is represented by a linear combination of theta series. 
The condition $p>2k+1$ is necessary for the use of this theorem.
\item
We compare our result with those of \cite{Na,Kat-Na2}. 
All of these results are for the general degree case. 
Our result is for the case where the weight $k$ is general, 
whereas their results are for $k=1$ in \cite{Na} and $k=2$ in \cite{Kat-Na2}. 
Our result requires the regularity condition on $p$ for all $n\ge 1$. 
Their results require such the condition in the case $n>2k+1$, but not in the case $n\le 2k+1$. 
\item
Our result shows (for regular $p$) that 
the twisted Siegel--Eisenstein series part for 
$\widetilde{E}_{(k,k+\frac{p-1}{2})}^{(2)}$ (with $p>2k+1$) obtained in Katsurada--Nagaoka \cite{Kat-Na} is also
a linear combination of genus theta series. 
Moreover, the description is explicitly given, because of (2).
However, this is also implied from Katsurada--Nagaoka \cite{Kat-Na} with Katsurada--Schulze-Pillot \cite{Kat-Sch}
or also from Mizuno \cite{Mizuno2} with Katsurada--Schulze-Pillot \cite{Kat-Sch}.   
\end{enumerate}
\end{Rem}

By Siegel's main theorem, we know that all genus theta series are elements of the space of the Siegel--Eisenstein series with corresponding level (see e.g. Freitag \cite{Frei}).
Therefore, our $p$-adic Siegel--Eisenstein series   
$\widetilde{E}_{{\boldsymbol k}_j}^{(n)}$
is an element of the space of Siegel--Eisenstein series of level $p$ by our theorem. 
Furthermore, we can prove that the eigenvalue of $\widetilde{E}_{{\boldsymbol k}_j}^{(n)}$ is $1$ for the Hecke operator $U(p)$ (for the definition of $U(p)$, see Subsection \ref{eq:Up}). 
\begin{Cor}
\label{cor:eisen}
In the same situation as Theorem \ref{thm:main}, we have the following statements. 
\begin{enumerate} \setlength{\itemsep}{-3pt}
\item
The $p$-adic Siegel Eisnstein series $\widetilde{E}^{(n)}_{{\boldsymbol k}_j}$ ($j=0$, $1$) is an element of the space of Siegel--Eisenstein series of level $p$ with character $\chi _p^j$. 
\item 
We have 
$\widetilde{E}_{{\boldsymbol k}_j}^{(n)}\mid U(p)=\widetilde{E}_{{\boldsymbol k}_j}^{(n)}$.
\end{enumerate}
\end{Cor}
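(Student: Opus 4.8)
The plan is to treat the two assertions separately: (1) follows at once from Theorem~\ref{thm:main} and Siegel's main theorem, whereas (2) is a statement about Fourier coefficients that I would obtain from the $p$-adic continuity of $U(p)$ together with a local analysis at $p$ of the Siegel series. For (1), Theorem~\ref{thm:main} expresses $\widetilde{E}_{{\boldsymbol k}_j}^{(n)}$ as a $\Q$-linear combination of the genus theta series $(\Theta^{(n)}_{{\rm gen}(S)})^0$ with $S$ positive definite of matrix size $2k$, $\mathrm{level}(S)\mid p$, and $\chi_S=\chi_p^j$; by Siegel's main theorem (as recalled just before the statement) each such $(\Theta^{(n)}_{{\rm gen}(S)})^0$ lies in the space of Siegel--Eisenstein series of level $p$ with character $\chi_p^j$, and since that space is a $\Q$-vector space the linear combination lies in it as well.

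For (2), write $a(T;f)$ for the $T$-th Fourier coefficient of a degree-$n$ form $f$, with $T$ running over half-integral positive semidefinite symmetric matrices of size $n$. With the normalization of $U(p)$ fixed in Subsection~\ref{eq:Up} one has $a(T;f\mid U(p))=a(pT;f)$, so the claim is equivalent to $a(pT;\widetilde{E}_{{\boldsymbol k}_j}^{(n)})=a(T;\widetilde{E}_{{\boldsymbol k}_j}^{(n)})$ for all $T$. Since $f\mapsto f\mid U(p)$ is plainly continuous for the $p$-adic topology on Fourier coefficients, it commutes with $p$-adic limits, so $\widetilde{E}_{{\boldsymbol k}_j}^{(n)}\mid U(p)=\lim_{m}(E_{k_j(m)}^{(n)}\mid U(p))$ and the required identity reduces to $\lim_m a(pT;E_{k_j(m)}^{(n)})=\lim_m a(T;E_{k_j(m)}^{(n)})$. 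The cases $T=0$ (where $a(0;E_k^{(n)})=1$ for every $k$) and $\rank(T)>2k$ (where both limits vanish, because the relevant multiples of $E_{k_j(m)}^{(n)}$ are mod $p^{c(m)}$ singular and $\rank(pT)=\rank(T)$) are immediate.

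For $1\le\rank(T)\le 2k$ I would insert the explicit formula $a(T;E_k^{(n)})=\gamma_k\cdot\prod_\ell b_\ell(T,k)$, in which $\gamma_k$ is a $T$-independent normalizing factor (a product of Bernoulli numbers and, when $n$ is even, an $L$-value attached to the quadratic character of $\det(2T)$) and $b_\ell(T,k)$ are the local Siegel series; for degenerate $T$ one first reduces to the nondegenerate case by the Siegel $\Phi$-operator. Two remarks localize the problem at $p$: the Siegel series is invariant under scaling $T$ by a $p$-adic unit, so $b_\ell(pT,k)=b_\ell(T,k)$ for every $\ell\neq p$; and $\det(2pT)=p^{n}\det(2T)$ differs from $\det(2T)$ by a square when $n$ is even, so the character---and hence $\gamma_k$---is unchanged under $T\mapsto pT$. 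Thus everything cancels except the factor at $p$, and (2) reduces to the single identity $\lim_m b_p(pT,k_j(m))=\lim_m b_p(T,k_j(m))$, equivalently that the ratio $b_p(pT,k)/b_p(T,k)$ tends $p$-adically to $1$.

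I expect this local computation to be the main obstacle. It is transparent for $n=1$: there $b_p(T,k)=\sum_{i=0}^{\ord_p(t)}p^{\,i(k-1)}$ for $T=(t)$, every nonconstant term tends $p$-adically to $0$ as $k_j(m)\to\infty$, and hence both $b_p(T,k_j(m))$ and $b_p(pT,k_j(m))$ tend to $1$. In general I would invoke Katsurada's explicit formula for the Siegel series, present $b_p(T,k)$ (after clearing the $T$-independent Euler factors) as a polynomial in positive powers of $p^{k}$, and verify that precisely the terms which can change under $T\mapsto pT$ carry a positive power of $p^{k}$ and are therefore annihilated in the $p$-adic limit $k_j(m)\to{\boldsymbol k}_j$, leaving an invariant surviving part. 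The hypotheses $p>2k+1$ and the regularity of $p$ (Assumption~\ref{Assm:NV}) enter here to guarantee that $\gamma_{k_j(m)}$ has a nonzero finite $p$-adic limit, so that the factorization and the passage to the ratio are legitimate; granting the displayed local identity for all $T$ yields $\widetilde{E}_{{\boldsymbol k}_j}^{(n)}\mid U(p)=\widetilde{E}_{{\boldsymbol k}_j}^{(n)}$.
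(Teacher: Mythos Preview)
Your argument for (1) is correct and is exactly the paper's: Theorem~\ref{thm:main} together with Siegel's main theorem.

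For (2) your route diverges from the paper's and, as you yourself anticipate, the key step is left unproved. The paper never touches explicit Siegel series or Katsurada's formula here. Instead it exploits that the level-one series $E_k^{(n)}$ is already a Hecke eigenform for the full operator $T_n(p)$, and proves two short lemmas: first, $T_n(p)=U(p)+p^{\,k-n}\widetilde V(p)$ with $\widetilde V(p)$ an integral linear combination of operators preserving integrality of Fourier coefficients (this comes from Mizumoto's divisibility estimate for the non-$U(p)$ cosets); second, the \v{Z}arkovskaya relation $\lambda_{n,k}(p)=p^{\,nk-n(n+1)/2}+\lambda_{n-1,k}(p)$, which inductively gives $\lambda_{n,k_j(m)}(p)\equiv\lambda_{1,k_j(m)}(p)=1+p^{\,k_j(m)-1}\equiv 1$ modulo a growing power of $p$. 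Combining these yields
\[
p^{-\mu_j}E_{k_j(m)}^{(n)}\mid U(p)\ \equiv\ p^{-\mu_j}E_{k_j(m)}^{(n)}\mid T_n(p)\ =\ \lambda_{n,k_j(m)}(p)\cdot p^{-\mu_j}E_{k_j(m)}^{(n)}\ \equiv\ p^{-\mu_j}E_{k_j(m)}^{(n)}
\]
modulo $p^{\min(c(m),c'(m))}$, and taking the $p$-adic limit finishes. No local density computation is needed at all.

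Your proposed reduction to the single local identity $\lim_m b_p(pT,k_j(m))=\lim_m b_p(T,k_j(m))$ is in principle a legitimate alternative, and you carry it out cleanly for rank one. But for higher rank the assertion that, after clearing $T$-independent Euler factors, $b_p(T,k)$ is a ``polynomial in positive powers of $p^k$'' whose $T\mapsto pT$-sensitive terms all carry a strictly positive power is not established; Katsurada's formula mixes factors of the shape $(\det 2T)^{k-\cdots}$ (positive powers) with products of the form $\prod(1-p^{\,\cdots-k})$ (negative powers), and the bookkeeping of which pieces change under $T\mapsto pT$ is exactly the work you have not done. (A minor slip: you write ``when $n$ is even'' for the invariance of the character, but after applying $\Phi$ the relevant parameter is the rank $r$ of $T$, not the ambient degree $n$; for odd $r$ there is no character and the point is moot.) So the gap is concrete: the local identity at $p$ is asserted, not proved. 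The paper's Hecke-operator approach sidesteps this computation entirely.
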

\begin{Rem}
\label{rem:multi}
\begin{enumerate}
\setlength{\itemsep}{-3pt}
\item 
The statement (1) for $\widetilde{E}^{(2)}_{{\boldsymbol k}_1}$ is proved directly by Mizuno \cite{Mizuno2}. 
\item
The results of Katsurada--Nagaoka \cite{Kat-Na,Kat-Na2} also yield the claims (1) for $\widetilde{E}^{(2)}_{{\boldsymbol k}_1}$ and $\widetilde{E}^{(n)}_{(2,2)}$. 
\item 
If $k>(n+1)/2$, 
the statement (2) completely characterizes the Siegel--Eisenstein series up to constant multiples,  since the multiplicity is one for $U(p)$-eigenvalue $1$ (see Gunji \cite[Corollarly 2.5]{Gu}).
If $k$ is small, the situation is more delicate for two reasons:
One has to look at Siegel--Eisenstein series of level $p$ 
after analytic continuation. 
One only knows that the multiplicity of the $U(p)$-eigenvalue $1$ is less or equal
to two on this space of Siegel--Eisenstein series \cite{Gu}.
Furthermore, one needs Siegel's main theorem outside the range of convergence.
Such a version is indeed available and one can deduce the explicit classical version
fitting into our context from \cite[(4.44)]{Kudla}, as was kindly pointed out to us by Schulze-Pillot \cite{Schupi}. 
\end{enumerate}
\end{Rem}

Finally, we mention the advantages of our method:
The strength of our approach is that we can check
the equality (\ref{eq:lin0})
without hard calculations of local densities as they
do in \cite{Ki-Na,Kat-Na2}. 
This method allows us also to give generalizations of their results in terms of weight $k$ and degree $n$.

\section{Preliminaries}
\label{Sec:2}
\subsection{Siegel modular forms}
\label{sec:siegel-modular-forms}
Let $n$ be a positive integer and
$\hh_{n}$ the Siegel upper half space of degree $n$ defined as
\begin{equation*}
  \h_{n}:=\left\{X+ i Y \; | \;
    X,\ Y\in \sym_{n}(\R), \ Y>0
  \right\},
\end{equation*}
where $Y>0$ means that $Y$ is positive definite, and $\sym_{n}(R)$ is the set of symmetric matrices of size $n$ with components in $R$.

The Siegel modular group $\Gamma _n$ of degree $n$ is defined by 
\begin{equation*}
  \Gamma_{n} := \left\{\gamma \in \mathrm{GL}_{2n}(\Z)
    \; |\; \trn \gamma J_{n} \gamma = J_{n}
  \right\},
\end{equation*}
where $J_{n} = \smat{0_{n}}{-1_{n}}{1_{n}}{0_{n}}$ and $0_{n}$ (resp. $1_{n}$)
is the zero matrix (resp. the identify matrix) of size $n\times n$.

Let $N$ be a positive integer. 
In this paper, 
we deal with the congruence subgroup $\Gamma _0^{(n)}(N)$ of $\Gamma _n$ defined as 
\begin{align*}
&\Gamma _0^{(n)}(N):=\left\{ \begin{pmatrix}A & B \\ C & D \end{pmatrix}\in \Gamma _n \: \Big| \: C\equiv 0_n \bmod{N} \right\}.
\end{align*}
Here $A$, $B$, $C$, $D$ are $n \times n$ matrices.

We define an action of $\Gamma _n$ on $\hh_{n}$ by
$\gamma Z := (AZ + B)(CZ + D)^{-1}$ for $Z \in \hh_{n}$, $\gamma = \left( \begin{smallmatrix} A & B \\ C & D \end{smallmatrix}\right) \in \Gamma _n$.
For a holomorphic function $F:\mathbb{H}_n\longrightarrow \mathbb{C}$ and a matrix $\gamma =\left( \begin{smallmatrix} A & B \\ C & D \end{smallmatrix}\right)\in \Gamma _n$,
we define a slash operator by
\[F|_k\; \gamma :=\det(CZ+D)^{-k}F(\gamma Z).\]

For a positive integer $k$ and a Dirichlet character 
$\chi $ mod $N$, 
the space $M_k(\Gamma _0^{(n)}(N),\chi )$ of Siegel modular forms of weight $k$ with character $\chi$ consists of all of holomorphic functions $F:\mathbb{H}_n\rightarrow \mathbb{C}$ satisfying
\begin{equation*}
F|_{k}\: \gamma =\chi (\det D)F(Z)\quad \text{for}\quad \gamma =\begin{pmatrix}A & B \\ C & D \end{pmatrix}\in \Gamma _0^{(n)}(N).
\end{equation*}
If $n=1$, the usual condition for the cusps should be added. 
When $\chi = {\boldsymbol 1}_N$ (trivial character mod $N$), 
we write simply $M_k(\Gamma _0^{(n)}(N))$ for $M_k(\Gamma _0^{(n)}(N),{\boldsymbol 1}_N)$.

Any $F \in M_k(\Gamma _0^{(n)}(N), \chi )$ has a Fourier expansion of the form
\[
F(Z)=\sum_{0\leq T\in\Lambda_n}a_F(T)q^T,\quad q^T:=e^{2\pi i {\rm tr}(TZ)},
\quad Z\in\mathbb{H}_n,
\]
where
\[
\Lambda_n
:=\{ T=(t_{ij})\in {\rm Sym}_n(\mathbb{Q})\;|\; t_{ii},\;2t_{ij}\in\mathbb{Z}\}.
\]

For a subring $R$ of $\mathbb{C}$, we denote by $M_{k}(\Gamma _0^{(n)}(N), \chi )_{R}$ 
the $R$-module consisting of all $F\in M_{k}(\Gamma _0^{(n)}(N), \chi )$ satisfying $a_F(T)\in R$ for all $T\in \Lambda _n$. 
We write simply $M_{k}(\Gamma _0^{(n)}(N))_{R}$ for  
$M_{k}(\Gamma _0^{(n)}(N), {\boldsymbol 1}_N)_{R}$.  
\subsection{Theta series for quadratic forms}
\label{subsec:theta}
For two matrices $A$, $B$, we write $A[B]:={}^tBAB$ when the products are defined. 
Let $m$ be a positive integer. 
For $S$, $T\in \Lambda _m$, we write $S\sim T$ mod ${\rm GL}_m(\Z)$ if 
there exists $U\in {\rm GL}_m(\Z)$ such that $S[U]=T$.
We say that $S$ and $T$ are ``${\rm GL}_m(\Z)$-equivalent'' if $S\sim T$ mod ${\rm GL}_m(\Z)$. 
We denote by $\Lambda _m^+$ the set of all positive definite elements of $\Lambda _m$. 
We put $L:=\Lambda _m$ or $\Lambda ^+_m$. 
We write $L/{\rm GL}_m(\Z)$ for $L/\sim $  
the set of representatives of ${\rm GL}_m(\Z)$-equivalence classes in $L$.   

Let $m$ be even. 
For $S\in \Lambda _m^+$, we define the theta series of degree $n$ in the usual way:
\begin{align}
\label{eq:theta}
\theta _S^{(n)}(Z):=\sum _{X\in \Z^{m,n}}e^{2\pi i({\rm tr}(S[X]Z))}\quad (Z\in \hh_{n}), 
\end{align}
where $\Z^{m,n}$ is the set of  $m\times n$ matrices with integral components. 
We define the level of $S$ as 
\[{\rm level}(S):=\min\{N\in \Z_{\ge 1} \;|\; N(2S)^{-1}\in 2\Lambda _m\}. \]
Then $\theta _S^{(n)}$ defines an element of $M_{\frac{m}{2}}(\Gamma _0^{(n)}(N),\chi _S)$, 
where $N={\rm level}(S)$, $\chi _S$ is a Dirichlet character mod $N$ defined by 
\begin{align}
\label{eq:char}
\chi _S(d)={\rm sign} (d)^\frac{m}{2} \left( \frac{(-1)^\frac{m}{2}\det 2S}{|d|} \right).
\end{align}

For fixed $S\in \Lambda^+ _{m}$ and $T\in \Lambda _n$, we put 
\[A(S,T):=\sharp\{X \in \Z^{m,n} \;|\; S[X]=T \}.\]
Using this notation, we can write the Fourier expansion of the theta series in the
form 
\[\theta _S^{(n)}(Z)=\sum _{T\in \Lambda _n}A(S,T)q^T. \]

Let $S\in \Lambda _m^+$ and ${\rm gen}(S)$ be the genus containing $S$. 
Let $\{S_1,\cdots ,S_h\}$ be a set representatives of ${\rm GL}_m(\Z)$-equivalence classes in ${\rm gen}(S)$. 
The genus theta series associated with  $S$ is defined by 
\[\Theta ^{(n)}_{{\rm gen}(S)}(Z):=\left(\sum _{i=1}^h\frac{\theta _{S_i}^{(n)}(Z)}{\epsilon(S_i)}\right)/\left(\sum _{i=1}^h\frac{1}{\epsilon (S_i)}\right), \]
where we put $\epsilon (T):=A(T,T)$ for $T\in \Lambda_m^+$. 
Then we have also $\Theta ^{(n)}_{{\rm gen}(S)}\in M_{\frac{m}{2}}(\Gamma _0^{(n)}(N),\chi _S)$
with $N={\rm level}(S)$. 
For $(\Theta ^{(n)}_{{\rm gen}(S)})^0$ defined in Introduction, we have the following relation  
\[(\Theta ^{(n)}_{{\rm gen}(S)})^0=\left(\sum _{i=1}^h\frac{1}{\epsilon (S_i)}\right)\cdot \Theta ^{(n)}_{{\rm gen}(S)}. \]

\subsection{Siegel--Eisenstein series}
We put 
\[\Gamma _\infty ^{(n)}:=\left\{\mat{A}{B}{C}{D}\in \Gamma _n\; \Big| \; C=0_n\right\}. \]
For an even integer $k$ with $k>n+1$, the Siegel--Eisenstein series of degree $n$ and weight $k$ with level $1$ is defined as 
\[E_k^{(n)}(Z):=\sum _{\smat{*}{*}{C}{D}\in \Gamma _\infty^{(n)}\backslash \Gamma_n}\det (CZ+D)^{-k},\quad Z\in \hh_n.  \]
This series defines an element of $M_k(\Gamma _n)_\Q$. 
We denote by
\[E_k^{(n)}=\sum _{T\in \Lambda _n }a_k^{(n)}(T)q^T\]
the Fourier expansion of $E_k^{(n)}$.
The formula for $a^{(n)}_k(T)$ is given explicitly by Katsurada \cite{Kat}. 
However, in this paper, we do not require the explicit formulas for all Fourier coefficients. 
The results from \cite{Boe} on integrality properties of Fourier coefficients and primitive Fourier coefficients are sufficient for our purpose. 
We review these results below.

We put $\Lambda _n^{(r)}:=\{T\in \Lambda _n\;|\; {\rm rank}(T)=r\}$. 
Note that, if $T\in \Lambda ^{(r)}_n$ then we have
 $a_k^{(n)}(T)=a_k^{(r)}(T')$ for $T'\in \Lambda _r^+$ such that $T\sim \smat{T'}{0}{0}{0}$ mod ${\rm GL}_n(\Z)$.
Therefore, we may assume that $T\in \Lambda _{r}^+$ and may consider $a_k^{(r)}(T)$, 
instead of $a_k^{(n)}(T)$ with $T\in \Lambda _n^{(r)}$. 
\begin{Thm}[\cite{Boe}]
\label{thm:int}
We put  
\begin{align*}
c_{k,n}:=
\begin{cases} 
\displaystyle 2^n \cdot \frac{k}{B_k} \cdot \prod_{i=1}^{\frac{n-1}{2}}\frac{k-i}{B_{2k-2i}}\quad \text{if}\quad n\ \text{odd}, \\
\displaystyle 2^n \cdot \frac{k}{B_k}\cdot \frac{1}{D_{2k-n}^*}\cdot \prod_{i=1}^{\frac{n}{2}}\frac{k-i}{B_{2k-2i}}\quad \text{if}\quad n\equiv 0\bmod{4}, \\
\displaystyle 2^{n-1}\cdot \frac{k}{B_k}\cdot \frac{1}{D^{**}_{2k-n}}\cdot \prod_{i=1}^{\frac{n}{2}}\frac{k-i}{B_{2k-2i}}\quad \text{if}\quad n\equiv 2 \bmod{4},
\end{cases}
\end{align*}
where $D^*_{2k-n}$, $D_{2k-n}^{**}$ are two natural numbers defined as 
\begin{align*}
D_{2k-n}^*&:=\prod_{p|D_{2k-n}}p^{1+v_p(k-\frac{n}{2})}\quad for\ n\equiv 0 \bmod{4},\\
D_{2k-n}^{**}&:=\prod_{\substack{p|D_{2k-n} \\ p\equiv3 \bmod{4}}}p^{1+v_p(k-\frac{n}{2})}\quad for\ n\equiv 2 \bmod{4},
\end{align*}
where $D_{2k-n}$ is the denominator of the $(2k-n)$-th Bernoulli number $B_{2k-n}$. 
Then we have $a_{k}^{(n)}(T)\in c_{k,n} \mathbb{Z}$ for $T\in \Lambda _{n}^+$.  
\end{Thm}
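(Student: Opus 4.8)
The plan is to start from the explicit product formula for the Fourier coefficients of the normalized Siegel--Eisenstein series in terms of local Siegel series (equivalently, local representation densities) together with a global normalizing constant assembled from special values of $\zeta$ and of a quadratic Dirichlet $L$-function. For $T\in\Lambda_n^+$, after normalizing the constant term to be $1$, one has a factorization of the shape
\[
a_k^{(n)}(T)=\gamma_{k,n}(T)\cdot\prod_p b_p(T,k),
\]
where $b_p(T,k)$ is the local Siegel series attached to $T$ at $p$ (a polynomial in $p^{-s}$ with rational integer coefficients, evaluated at $s=k$), and $\gamma_{k,n}(T)$ collects the archimedean factor with the reciprocals of those $\zeta$- and $L$-values which do not occur among the local Euler factors. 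Concretely, for $n$ odd the constant $\gamma_{k,n}(T)$ involves only $\zeta(1-k)^{-1}$ and $\zeta(1-2k+2i)^{-1}$ for $1\le i\le (n-1)/2$, and is independent of $T$; for $n$ even there appears in addition the reciprocal of $L(1-k+\tfrac{n}{2},\chi_{d_T})$, where $\chi_{d_T}$ is the quadratic character attached to the discriminant $d_T=(-1)^{n/2}\det(2T)$, which genuinely depends on $T$.

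Next I would verify that the purely local product $\prod_p b_p(T,k)$ is a positive rational integer. Away from $\det(2T)$ the local series is the standard good Euler factor, whose product over all such $p$ is cancelled precisely by the $\zeta$- and $L$-values that have been pulled into $\gamma_{k,n}(T)$; at the finitely many bad $p\mid\det(2T)$ the Siegel series is a polynomial in $p$ with integer coefficients. Hence $\prod_p b_p(T,k)\in\Z$, and all denominators of $a_k^{(n)}(T)$ are confined to the global constant $\gamma_{k,n}(T)$. This reduces the theorem to a uniform control of the denominator of $\gamma_{k,n}(T)$ over all $T\in\Lambda_n^+$.

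Finally I would translate the $\zeta$- and $L$-values into Bernoulli numbers, using $\zeta(1-k)=-B_k/k$ and $\zeta(1-2m)=-B_{2m}/(2m)$, which produces exactly the factors $k/B_k$ and $(k-i)/B_{2k-2i}$ (the powers of $2$ being absorbed into the prefactor $2^n$, resp.\ $2^{n-1}$). For $n$ odd this already yields $a_k^{(n)}(T)\in c_{k,n}\Z$ with a $T$-independent $c_{k,n}$. For $n$ even the remaining task is to bound the denominator of the special value $L(1-k+\tfrac{n}{2},\chi_{d_T})=-B_{k-n/2,\chi_{d_T}}/(k-\tfrac{n}{2})$ uniformly in $T$. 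Here I would invoke a generalized von Staudt--Clausen statement for the generalized Bernoulli number $B_{m,\chi}$ with $m=k-\tfrac{n}{2}$: its denominator (normalized by $m$) is divisible only by primes $p$ satisfying a prescribed parity condition, and the resulting bound depends only on $m$ and on the set of primes dividing the denominator $D_{2k-n}$ of $B_{2k-n}$, not on the individual character. The parity of $n/2$ fixes the sign of $d_T$, hence whether $\chi_{d_T}$ is even (case $n\equiv0\bmod4$, where all $p\mid D_{2k-n}$ contribute, giving $D^*_{2k-n}$) or odd (case $n\equiv2\bmod4$, where only $p\equiv3\bmod4$ contribute, giving $D^{**}_{2k-n}$), and the extra factor $p^{v_p(k-n/2)}$ records the normalization of $B_{m,\chi}$ by $m$ at primes dividing $m$.

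The main obstacle is precisely this last uniform denominator estimate: as $T$ ranges over all positive definite forms of size $n$, the conductor of $\chi_{d_T}$ is unbounded, yet one must trap the denominator of the associated $L$-value inside the \emph{fixed} integer $D^*_{2k-n}$ (resp.\ $D^{**}_{2k-n}$). I expect the proof of this step to rest on Kummer-type congruences for generalized Bernoulli numbers, together with a careful analysis of which primes can occur in the denominator, distinguishing the even- and odd-character cases exactly along the $n\equiv0$ versus $n\equiv2\bmod4$ dichotomy; everything else in the argument is the comparatively routine bookkeeping of the local factors and the conversion of zeta values to Bernoulli numbers.
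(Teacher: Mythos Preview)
The paper does not give its own proof of this theorem: it is quoted verbatim from \cite{Boe} (note the attribution in the theorem header) and is used only as an input for the integrality/divisibility estimates in Proposition~\ref{prop:cong}. There is therefore nothing in the present paper to compare your proposal against.

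That said, your sketch is a faithful outline of the argument actually carried out in \cite{Boe}. The decomposition $a_k^{(n)}(T)=\gamma_{k,n}(T)\cdot\prod_p b_p(T,k)$ with integral local Siegel series, the identification of the $\zeta$-values with the Bernoulli factors $k/B_k$ and $(k-i)/B_{2k-2i}$, and the reduction of the even-$n$ case to a uniform denominator bound for $B_{k-n/2,\chi_{d_T}}/(k-\tfrac{n}{2})$ are exactly the steps taken there. Your identification of the ``main obstacle'' is also accurate: the uniform control of the denominator of the generalized Bernoulli number as the conductor of $\chi_{d_T}$ varies is the genuine content, and it is handled in \cite{Boe} via a Carlitz-type generalization of von Staudt--Clausen for $B_{m,\chi}$, with the even/odd character dichotomy producing precisely the $D^*_{2k-n}$ versus $D^{**}_{2k-n}$ split. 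One small point to tighten: the assertion that for $n$ odd $\gamma_{k,n}(T)$ is independent of $T$ is correct at the level of the \emph{denominator}, but the full Fourier coefficient still involves an $L$-value depending on $T$ through a character of odd order parity; the point is rather that in odd degree this $L$-value contributes no new denominator beyond what is already absorbed in the $\zeta$-factors, which is why no $D^*$/$D^{**}$ term appears. With that caveat your plan matches the source.
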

Let $a_k^{(n)}(T)^*$ be the primitive Fourier coefficient of Siegel--Eisenstein series defined by 
\begin{align}
\label{eq:rel_pri}
a_k^{(n)}(T)=\sum _{D}a_k^{(n)}(T[D^{-1}])^*, 
\end{align}
where $D$ runs over all elements of ${\rm GL}_n(\Z)\backslash \{D\in \Z^{n,n}\; |\; \det D\neq 0\}$
satisfying $T[D^{-1}]\in \Lambda _n^+$. 
Note that the sum is actually finite. 
Using Hecke theory for ${\rm GL}_n$, one can show that such primitive Fourier coefficients exist and can be explicitly written as finite
integral linear combination of usual Fourier coefficients.  
We do not need this explicit expression in this paper.
However, both these relations depend on the global properties of $T$, but $a_k^{(n)}(T)$ and 
$a_k^{(n)}(T)^*$ are given by local properties (of different type).

\begin{Thm}[\cite{Boe}]
\label{thm:pri}
Let $n$ be a positive even integer. 
For any $T\in \Lambda _{n}^+$, we can separate $a_k^{(n)}(T)^*$ into a ``Bernoulli part'' and a ``local part'' 
\[a_k^{(n)}(T)^*=b_k^{(n)}(T)\cdot c_k^{(n)}(T), \]
where 
\begin{align*}
b_k^{(n)}(T)
&=2^{n-\frac{n}{2}}\cdot \frac{k}{B_k}\cdot \frac{B_{k-\frac{n}{2},\eta _T}}{k-\frac{n}{2}} \cdot \prod _{i=1}^{\frac{n}{2}}\frac{2k-2i}{B_{2k-2i}} 
\end{align*}
and $c_k^{(n)}(T)$ being always integral, and (for even $n$) it is given explicitly by  
\begin{align*}
c_k^{(n)}(T)= (\det (2T) f_T^{-1})^{k-\frac{n+1}{2}}\prod _{q\mid \det(2T)}(1-\eta _T(q)q^{\frac{n}{2}-k})B^*_q(J^{2k},T), 
\end{align*}
with 
\[
B^*_q(J^{2k},T)
=\begin{cases}
&\prod _{j=1}^{\frac{s-1}{2}}(1-q^{2j+n-2k}) \quad \text{if}\quad s\ \text{odd},\\
&(1+\lambda _q(T)q^{\frac{n+s-2k}{2}})\prod_{j=1}^{\frac{s}{2}-1}(1-q^{2j+n-2k}) \quad \text{if}\quad s\ \text{even}. 
\end{cases}
\]
In these formulas, 
\begin{itemize}  \setlength{\itemsep}{-3pt}
\item 
$s$ is defined as the integer for which the rank over $\F_q$ of $T$ is $n-s$, 
\item
$\eta _T$ is the primitive character associated with $\chi _T$, 
\item
$\lambda _q(T)$ is the quadratic character attached to the regular part of $T$ over $\F_q$, i.e., 
$\lambda _q(T):=\chi _S(q)$ when $T[U]=\smat{S}{0}{0}{0_s}$ with $\det S\neq 0$ in $\F_q$ for some $U\in {\rm GL}_n(\F_q)$.  
Here we regard as $\lambda _q(T)=1$ if $s=n$. 
\end{itemize}
\end{Thm}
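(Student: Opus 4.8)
\textbf{The plan} is to derive both factors from the standard local--global expression for the Fourier coefficients of a Siegel--Eisenstein series as a product of local densities, and then to convert ordinary coefficients into primitive ones by an inversion of (\ref{eq:rel_pri}) that respects this local structure. Since $k$ is even and (after normalization) $E_k^{(n)}$ has constant term $1$, Siegel's formula together with the Siegel--Weil identity (in the convergent range, or after analytic continuation) expresses $a_k^{(n)}(T)$ as an archimedean factor times a product over all primes $q$ of the local representation densities of $T$ by the locally split unimodular form $J^{2k}$ of rank $2k$ that underlies the weight-$k$ Eisenstein series; write $\beta_q(T)$ for these local Siegel series. The three ingredients of $b_k^{(n)}(T)$ and the bad-prime product in $c_k^{(n)}(T)$ then correspond, respectively, to the good primes together with the archimedean place, and to the finitely many primes $q\mid\det(2T)$.

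\emph{Step 1 (good primes give the Bernoulli part).} First I would record that for $q\nmid\det(2T)$ the density $\beta_q(T)$ is ``generic'': it depends only on $q$, on $n$ and $k$, and on the value $\eta_T(q)$ of the primitive quadratic character attached to $(-1)^{n/2}\det(2T)$. Multiplying these generic factors over all such $q$ and folding in the archimedean factor assembles an Euler product into a ratio of special $L$-values, namely $\zeta(1-k)$, the twisted value $L(1-(k-\tfrac n2),\eta_T)$, and the values $\zeta(1-(2k-2i))$ for $1\le i\le \tfrac n2$. Evaluating these through the classical formula relating them to (generalized) Bernoulli numbers produces exactly
\[
b_k^{(n)}(T)=2^{n-\frac n2}\cdot\frac{k}{B_k}\cdot\frac{B_{k-\frac n2,\eta_T}}{k-\frac n2}\cdot\prod_{i=1}^{\frac n2}\frac{2k-2i}{B_{2k-2i}}.
\]

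\emph{Step 2 (primitivity is local).} The relation (\ref{eq:rel_pri}) is a M\"obius-type inversion over the poset of ${\rm GL}_n(\Z)$-classes of full sublattices. By elementary divisor theory this inversion factors prime by prime, so that passing from $a_k^{(n)}(T)$ to $a_k^{(n)}(T)^*$ amounts to replacing each $\beta_q(T)$ by its primitive analogue. At $q\nmid\det(2T)$ there is nothing to reduce, so the primitive and ordinary densities coincide and the whole Bernoulli part is untouched; only the bad primes are affected. This is precisely what makes the clean decomposition $a_k^{(n)}(T)^*=b_k^{(n)}(T)\cdot c_k^{(n)}(T)$ possible, with $c_k^{(n)}(T)$ equal to the product of the primitive local factors over $q\mid\det(2T)$, up to the global size factor $(\det(2T)f_T^{-1})^{k-\frac{n+1}{2}}$.

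\emph{Step 3 (explicit evaluation at bad primes) and the main obstacle.} For each $q\mid\det(2T)$ I would compute the primitive local density from the Jordan splitting of $T$ over $\Z_q$. The $\F_q$-corank $s=n-\mathrm{rank}_{\F_q}(T)$ governs the shape of the answer, and a Kitaoka-type recursion on the Jordan components yields the stated $B^*_q(J^{2k},T)$, the dichotomy between $s$ odd and $s$ even and the appearance of $\lambda_q(T)$ reflecting whether the regular part of $T$ over $\F_q$ carries a split or a nonsplit quadratic character. Integrality of $c_k^{(n)}(T)$ then follows by absorbing the negative powers of $q$ in the factors $1-q^{2j+n-2k}$ and $1-\eta_T(q)q^{\frac n2-k}$ into the size factor $(\det(2T)f_T^{-1})^{k-\frac{n+1}{2}}$. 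I expect this last step to be the crux: the Euler-product bookkeeping of Step~1 and the localization in Step~2 are essentially formal, whereas the explicit closed form of the primitive density at ramified primes --- keeping exact track of $s$, of the character $\lambda_q(T)$, and of the correct power of $q$ --- is where the genuine combinatorial work in the theory of local densities lies.
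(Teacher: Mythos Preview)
The paper does not prove this theorem at all: it is quoted verbatim as a result from \cite{Boe} (B\"ocherer, 1984), with only the remark that certain misprints in the original source cancel in the final formula. There is therefore no ``paper's own proof'' to compare your proposal against.

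That said, your outline is a faithful sketch of how the result in \cite{Boe} is actually obtained. The Fourier coefficients of the level-one Siegel--Eisenstein series are expressed as an archimedean factor times a product of local Siegel series $\beta_q(T)$; the good primes and the archimedean contribution assemble into the ratio of zeta and $L$-values that, via the Bernoulli-number formulas, give $b_k^{(n)}(T)$; and the primitivization in (\ref{eq:rel_pri}) localizes, leaving only the bad primes to be evaluated explicitly. Your identification of Step~3 as the real work is accurate: the closed form of $B^*_q(J^{2k},T)$ in terms of the $\F_q$-corank $s$ and the character $\lambda_q(T)$ is precisely the computation carried out in \cite{Boe}, and the integrality of $c_k^{(n)}(T)$ is then a bookkeeping exercise with the determinant prefactor. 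So your proposal is correct in spirit and in structure, but since the present paper simply cites the result, there is nothing further to compare.
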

\begin{Rem}
\begin{enumerate}  \setlength{\itemsep}{-3pt}
\item
As was pointed out in \cite[page 795]{Koh}, there are some misprints in \cite{Boe},
which luckily cancel each other in the final formula. We may therefore completely ignore them here.
\item 
In our arguments, we will need the above property only for even $n$.
\end{enumerate}
\end{Rem}

\subsection{Congruences for modular forms}
Let $p$ be an odd prime and $v_p$ the additive valuation on the $p$-adic field $\Q_p$ normalized such that $v_p(p)=1$. 
For a formal power series $F$ of the form $F=\sum _{T\in \Lambda _{n}}a_{F}(T)q^T$ with $a_{F}(T)\in \Q_p$,  
we define
\begin{align*}
&v_p(F):=\inf \{v_p(a_{F}(T))\; |\; T\in \Lambda _n\}, \\
&v_p^{(r)}(F):=\inf \{v_p(a_{F}(T))\; |\; T\in \Lambda _n,\ {\rm rank}(T)=r\}.  
\end{align*}
\begin{Thm}[\cite{Bo-Ki}]
\label{thm:wt_sing}
Let $k$ be a positive integer.  
Let $F\in M_k(\Gamma _0^{(n)}(N),\chi )_{\Q}$ with 
a quadratic Dirichlet character $\chi$ mod $N$. 
Then the equality 
$v_p^{(r+1)}(F)=v_p^{(r)}(f)+m$ ($r<n$) with $m\geq 1$ is 
possible only if $2k-r\equiv 0$ mod $(p-1)p^{m-1}$ and in particular $r$ is even.    
\end{Thm}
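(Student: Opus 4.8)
The plan is to reduce to the corank-one situation with the Siegel $\Phi$-operator and then to extract the weight congruence from the way the Hasse invariant and the theta operator act modulo powers of $p$. First I would normalize so that $v_p^{(r)}(F)=0$: multiplying $F$ by a power of $p$ alters neither $k$ nor $r$, hence does not affect the asserted congruence, and afterwards $F$ is $p$-integral, some rank-$r$ coefficient is a $p$-adic unit, and every rank-$(r+1)$ coefficient is divisible by $p^m$. Applying $\Phi$ exactly $n-r-1$ times yields $G:=\Phi^{\,n-r-1}F\in M_k(\Gamma_0^{(r+1)}(N),\chi')$ with $a_G(T')=a_F(\smat{T'}{0}{0}{0})$. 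Since every matrix of rank $r$ or $r+1$ is $\mathrm{GL}_n(\Z)$-equivalent to such a block matrix, and modularity under $\smat{{}^tU^{-1}}{0}{0}{U}$ gives $a_F(T[U])=\chi(\det U)\det(U)^k\,a_F(T)$, so that $a_F$ is $\mathrm{GL}_n(\Z)$-invariant up to the $p$-adic unit $\chi(\pm1)(\pm1)^k$, one gets $v_p^{(r)}(G)=0$ and $v_p^{(r+1)}(G)=m$. Thus it suffices to treat degree $n=r+1$, i.e. a form all of whose nondegenerate coefficients vanish modulo $p^m$ while its rank-$(n-1)$ part is nonzero modulo $p$. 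I also note that the parity claim needs no separate treatment: once $(p-1)\mid(2k-r)$ is known, evenness of $p-1$ (recall $p$ is odd) forces $r\equiv 2k\equiv 0 \bmod 2$.

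For the base case $m=1$ put $\bar F:=F\bmod p$, a nonzero mod $p$ form supported on matrices of rank $\le n-1$. As $\det T\equiv 0$ on this support, the theta operator annihilates it, $\Theta\bar F\equiv 0\bmod p$, so $\bar F$ is a corank-one mod $p$ singular form. I would then invoke the mod $p$ structure theory (the $m=1$ case of \cite{Bo-Ki,Bo-Ki2}, together with the results of B\"ocherer--Nagaoka on $\Theta$ and the weight filtration): such a form is represented by singular theta series, and modulo $p$ a weight is determined only up to multiples of $p-1$, because the Hasse invariant has weight $p-1$ and trivial $q$-expansion. Matching the weight of the corank-one theta series against $k$ modulo $p-1$ then pins $k$ down to $k\equiv \tfrac{n-1}{2}$, i.e. $2k\equiv r \bmod (p-1)$, which is the claim for $m=1$.

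For general $m$ I would induct on $m$, and this is where the real work lies. Granting the congruence modulo $(p-1)p^{m-2}$ inductively, I would subtract from $F$ a correcting form assembled from theta series of the appropriate genus, multiplied by suitable Hasse-invariant (equivalently mod $p^m$ Eisenstein) factors whose existence and $p$-divisibility are supplied by the integrality statements of Theorems \ref{thm:int} and \ref{thm:pri}; the correction is chosen to agree with $F$ on the rank-$r$ stratum to one higher power of $p$. Re-running the corank-one analysis on the difference should lower the gap from $m$ to $m-1$ while sharpening the weight condition by one further power of $p$. The arithmetic that upgrades ``one more power of $p$'' into the precise factor $p^{m-1}$ is a Kummer--von Staudt type congruence: the weight-$w$ Eisenstein series satisfies $E_w\equiv 1\bmod p^m$ whenever $(p-1)p^{m-1}\mid w$, and these serve as mod $p^m$ Hasse invariants, so consistency modulo $p^m$ of the weight-raising operations forces $2k-r\equiv 0\bmod (p-1)p^{m-1}$.

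The hard part will be precisely this last step. Modulo higher powers of $p$ the weight of a modular form is no longer an honest invariant, so the clean mod $p$ filtration used in the base case must be replaced by a delicate mod $p^m$ bookkeeping of leading terms; one has to ensure that each normalization and each application of $\Phi$ and of the weight-raising operators tracks the valuation gap exactly, with no loss, and that the corank-one reduction remains valid after the correcting forms are subtracted. Controlling this interplay between the rank stratification and the $p^m$-adic weight, rather than the individual congruences, is the crux of the argument.
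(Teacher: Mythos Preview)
The paper does not prove this theorem; it is quoted from \cite{Bo-Ki} and used as a black box (e.g.\ in the proof of Proposition \ref{prop:cong}\,(2)). There is therefore no proof in the present paper to compare your proposal against.

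That said, your proposal has genuine gaps. The reduction to degree $r+1$ via iterated $\Phi$ is correct and standard. But in the base case you invoke ``the $m=1$ case of \cite{Bo-Ki,Bo-Ki2}'' to supply the structure theory --- yet Theorem \ref{thm:wt_sing} \emph{is} a result of \cite{Bo-Ki}, so this is circular. Moreover the structure theorem you lean on (Theorem \ref{thm:structure} here, from \cite{Bo-Ki2}) carries the hypotheses $n\ge 2r$ and $p>r+1$, neither of which appears in Theorem \ref{thm:wt_sing}; you cannot import it freely.

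The inductive step is not a proof but a wish list. ``Subtracting a correcting form assembled from theta series'' to peel off one power of $p$ presupposes exactly the kind of structure result you are trying to establish, and the appeal to Theorems \ref{thm:int} and \ref{thm:pri} is misplaced: those describe Fourier coefficients of level-one Siegel--Eisenstein series and say nothing about producing mod $p^m$ Hasse invariants for arbitrary $\Gamma_0^{(n)}(N)$ with quadratic nebentypus. The actual mechanism that forces $2k-r\equiv 0 \bmod (p-1)p^{m-1}$ --- a weight-congruence principle for Siegel modular forms mod $p^m$ in the spirit of Serre and Katz --- is asserted rather than derived, and that is precisely the nontrivial content of \cite{Bo-Ki} that you would need to reproduce.
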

Using this theorem, we get properties of the weights of 
``mod $p^m$ singular'' modular forms defined as follows. 
Let $\Z_{(p)}$ be the set of $p$-integral rational numbers. 
\begin{Def}
We say that $F\in M_k(\Gamma _0^{(n)}(N),\chi )_{\Z_{(p)}}$ is ``mod $p^m$ singular'' if there exists $r$ satisfying the following properties: 
\begin{itemize}  \setlength{\itemsep}{-3pt}
\item
$a_F(T)\equiv 0$ mod $p^m$ for any $T\in \Lambda _{n}$ with ${\rm rank}(T)>r$, 
\item
there exists $T\in \Lambda _n$ with ${\rm rank}(T)=r$ such that $a_F(T)\not \equiv 0$ mod $p$. 
\end{itemize} 
We call such $r$ ``$p$-rank'' of $F$. 
\end{Def}
\begin{Cor}[\cite{Bo-Ki}]
Let $p$ be an odd prime and $k$ a positive integer.  
Let $F\in M_k(\Gamma _0^{(n)}(N),\chi )_{\Z_{(p)}}$ with 
$\chi $ a quadratic Dirichlet character mod $N$. 
Suppose that $F$ is mod $p^m$ singular of $p$-rank $r$. 
Then we have $2k-r\equiv 0$ mod $(p-1)p^{m-1}$.
In particular, $r$ should be even.      
\end{Cor}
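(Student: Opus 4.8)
The plan is to obtain the Corollary as a direct consequence of Theorem~\ref{thm:wt_sing}. The theorem asserts a weight congruence precisely when the $p$-adic valuation of the Fourier coefficients \emph{jumps} by a positive amount as one passes from rank $r$ to rank $r+1$, and the ``mod $p^m$ singular'' hypothesis is nothing but a packaging of such a jump. So the entire argument is a translation between the two formulations, followed by a routine divisibility manipulation; no new analytic input is required.

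First I would read the two relevant valuations off the definition. Since $F\in M_k(\Gamma_0^{(n)}(N),\chi)_{\Z_{(p)}}$, every Fourier coefficient is $p$-integral, so $v_p^{(r')}(F)\ge 0$ for all $r'$. The second bullet of the definition supplies a $T$ of rank $r$ with $a_F(T)\not\equiv 0$ mod $p$, i.e. with $v_p(a_F(T))=0$; combined with integrality this forces $v_p^{(r)}(F)=0$. The first bullet says $v_p(a_F(T))\ge m$ for every $T$ of rank $>r$, so in particular $v_p^{(r+1)}(F)\ge m$ (here I use $r<n$, which is the genuinely singular range the statement concerns). Setting $m':=v_p^{(r+1)}(F)$, we then have the exact relation $v_p^{(r+1)}(F)=v_p^{(r)}(F)+m'$ with $m'\ge m\ge 1$, so Theorem~\ref{thm:wt_sing} applies---its hypothesis that $\chi$ be quadratic is exactly what we are given---and yields $2k-r\equiv 0$ mod $(p-1)p^{m'-1}$. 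Since $m'\ge m$ we have $(p-1)p^{m-1}\mid (p-1)p^{m'-1}$, whence the desired (weaker) congruence $2k-r\equiv 0$ mod $(p-1)p^{m-1}$. Reducing this modulo $p-1$ and using that $p-1$ is even (as $p$ is odd) forces $2k-r$, and therefore $r$, to be even.

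The one step that needs genuine care is the gap between the \emph{exact}-increment hypothesis of Theorem~\ref{thm:wt_sing} and the mere lower bound $v_p^{(r+1)}(F)\ge m$ that the definition supplies: the theorem is sharp for the increment $m'$ actually realized by $F$, and one recovers the stated conclusion for $m$ only by tracking the divisibility $p^{m-1}\mid p^{m'-1}$. Two boundary situations should also be acknowledged. If $r=n$ there is no rank $r+1$ and the theorem is inapplicable; but such an $F$ has a full-rank coefficient nonzero mod $p$ and so is not singular in the intended sense, hence lies outside the scope. If instead $v_p^{(r+1)}(F)=+\infty$, i.e. all coefficients of rank $>r$ vanish identically, then $F$ is a classical singular form of rank $r$, the classical theory of singular modular forms gives $r=2k$, and the congruence (together with the parity of $r$) holds trivially.
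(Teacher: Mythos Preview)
Your proposal is correct and follows precisely the route the paper indicates: the Corollary is stated (with citation to \cite{Bo-Ki}) immediately after Theorem~\ref{thm:wt_sing} with the remark ``Using this theorem, we get properties of the weights of `mod $p^m$ singular' modular forms,'' and no further proof is given in the paper itself. Your translation of the mod $p^m$ singularity hypothesis into the jump $v_p^{(r+1)}(F)=v_p^{(r)}(F)+m'$ with $m'\ge m$, followed by the divisibility $(p-1)p^{m-1}\mid (p-1)p^{m'-1}$, is exactly the intended derivation; your explicit handling of the boundary cases $r=n$ and $m'=+\infty$ goes slightly beyond what the paper spells out.
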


Let $F_i$ ($i=1$, $2$) be two formal power series of the form
\[F_i=\sum _{T\in \Lambda _{n}}a_{F_i}(T)q^T\]
with $a_{F_i}(T)\in \Z_{(p)}$ for all $T\in \Lambda _n$. 
We write $F_1 \equiv F_2$ mod $p^m$ if $a_{F_1}(T)\equiv a_{F_2}(T)$ mod $p^m$ for all $T \in \Lambda _n$.  

We have now the following structure theorem.  
\begin{Thm}[\cite{Bo-Ki2}]
\label{thm:structure}
Let $n$ be a positive integer and $r$ an even integer with $n\ge 2r$. 
Let $p$ be an odd prime with $p>r+1$.
Suppose that $F\in M_{k}(\Gamma _{n})_{\Z_{(p)}}$ is mod $p^m$ singular with $p$-rank $r$.  
Then there exists $e\ge 0$ such that 
\[F\equiv \sum _{\substack{S\in \Lambda _r^+/{\rm GL}_r(\Z)\\ {{\rm level}(S)}\mid p^e
\\ \chi_S=\chi _p ^t }} c_S \theta _S^{(n)} \bmod{p^m} \quad (c_S\in \Z_{(p)}), \]
where $t$ is an integer satisfying $k-\frac{r}{2}=t\cdot \frac{p-1}{2}\cdot p^{m-1}$. 
\end{Thm}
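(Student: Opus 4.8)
The plan is to build the theta combination from its top Fourier layer (matrices of rank exactly $r$) and to use the weight congruence attached to Theorem~\ref{thm:wt_sing} both to pin down the character and to force all lower layers to vanish. The theta series $\theta_S^{(n)}$ with $S$ of matrix size $r$ are themselves singular, of weight $\tfrac r2$ and character $\chi_S$, with Fourier support in rank $\le r$; the essential tension is that their weight is $\tfrac r2$ rather than $k$. To begin, since $F$ is mod $p^m$ singular of $p$-rank $r$, the corollary to Theorem~\ref{thm:wt_sing} gives $2k-r\equiv 0\bmod (p-1)p^{m-1}$, so that $k-\tfrac r2=t\cdot\tfrac{p-1}2\cdot p^{m-1}$ for the integer $t$ in the statement; this $t$ will dictate the character $\chi_p^t$.

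First I would remove the weight discrepancy. I would fix an auxiliary form $H$ of weight $k-\tfrac r2$ and character $\chi_p^t$ with $H\equiv 1\bmod p^m$; this is the Siegel analogue of Serre's congruence $E_{p-1}\equiv 1\bmod p$. Indeed $E_{p-1}^{(n)}\equiv 1\bmod p$ already follows from Theorem~\ref{thm:int}, since its higher Fourier coefficients are divisible by $p$ through the factor $\tfrac{p-1}{B_{p-1}}$ with $v_p(B_{p-1})=-1$; suitable $p$-power products of $E_{p-1}^{(n)}$ (together with one half-weight factor of character $\chi_p$ congruent to $1$ modulo $p$ when $t$ is odd) then produce $H$. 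For $S$ of matrix size $r$ with $\chi_S=\chi_p^t$ the product $H\,\theta_S^{(n)}$ has weight $k$, trivial character (as $\chi_p^t\chi_S=\chi_p^{2t}=1$), level a power of $p$, and satisfies $H\,\theta_S^{(n)}\equiv\theta_S^{(n)}\bmod p^m$. This turns the assertion into a congruence $F\equiv\sum_S c_S\,H\theta_S^{(n)}\bmod p^m$ between genuine weight-$k$ forms of level dividing some $p^e$.

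Next I would match the top layer and terminate. Using that the theta series $\theta_S^{(n)}$ of size $r$ are linearly independent for $n\ge 2r$, I would choose $c_S\in\Z_{(p)}$ so that $\sum_S c_S\theta_S^{(n)}$ reproduces, modulo $p^m$, every Fourier coefficient $a_F(T)$ with $\mathrm{rank}(T)=r$; that the contributing forms carry character $\chi_p^t$ and $p$-power level is forced because $F$ has level $1$ and the relevant discriminants are powers of $p$. Set $G:=F-\sum_S c_S H\theta_S^{(n)}\in M_k(\Gamma_0^{(n)}(p^e))$. By construction the Fourier coefficients of $G$ vanish modulo $p^m$ in every rank $\ge r$ (rank $>r$ because $F$ is mod $p^m$ singular and the products are $\equiv\theta_S^{(n)}$, rank $=r$ by the choice of the $c_S$). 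If $G\not\equiv 0\bmod p^m$, write $G=p^jG'$ with $G'$ mod $p^{m-j}$ singular of some $p$-rank $r'<r$; the corollary to Theorem~\ref{thm:wt_sing} then gives $2k-r'\equiv 0\bmod(p-1)p^{m-j-1}$, whence $r\equiv r'\bmod(p-1)$. But $0<r-r'\le r<p-1$ by the hypothesis $p>r+1$, a contradiction. Hence $G\equiv 0\bmod p^m$, which is the desired congruence.

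The main obstacle is the representability used in the matching step: one must show that the top-rank Fourier data of $F$ genuinely lies in the $\Z_{(p)}$-span of the size-$r$ theta series with character $\chi_p^t$ and $p$-power level. Over $\C$ this is precisely Freitag's theorem \cite{Frei} that a singular form of rank $r$ is a linear combination of theta series, but here $F$ is singular only modulo $p^m$ and carries the ``wrong'' weight $k\neq\tfrac r2$, so one cannot simply invert $H$ and invoke the classical statement. What is required is a mod $p^m$ refinement of the theory of singular forms that matches the singular layer coefficient-by-coefficient with $p$-integral theta data; this is the technical heart of the argument, and it is exactly here that the hypotheses $n\ge 2r$ (linear independence and solvability for the $c_S$) and $p>r+1$ (through Theorem~\ref{thm:wt_sing} and the termination above) are indispensable.
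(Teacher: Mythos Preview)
This theorem is quoted from \cite{Bo-Ki2} and is not proved in the present paper, so there is no in-paper argument to compare your proposal against; the only methodological hint here is the phrase ``our refinement of Freitag's expansion (as in Section~7 of \cite{Bo-Ki2})'' appearing after Proposition~\ref{prop:weak}, which suggests that the actual proof adapts Freitag's structure theory of singular forms directly at the level of Fourier coefficients rather than passing through an auxiliary congruence form $H$. Your outline has the right skeleton and the termination step is essentially correct, but two points are genuine gaps. First, your construction of $H$ via $E_{p-1}^{(n)}$ fails in the stated generality: that Eisenstein series is defined as a holomorphic form only when $p-1>n+1$, whereas the hypotheses are merely $p>r+1$ and $n\ge 2r$, which allow $n$ arbitrarily large relative to $p$ (e.g.\ $r=2$, $p=5$, $n=100$); so Theorem~\ref{thm:int} is unavailable, and the ``half-weight factor of character $\chi_p$ congruent to $1$ modulo $p$'' for odd $t$ is likewise asserted but not constructed. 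Without a genuine modular $H$, the difference $G=F-\sum_S c_S H\theta_S^{(n)}$ is not a modular form and Theorem~\ref{thm:wt_sing} cannot be applied to it.

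Second---and you already flag this---the representability of the rank-$r$ Fourier layer of $F$ by a $\Z_{(p)}$-linear combination of representation numbers $A(S,T)$ with $S$ of $p$-power level and character $\chi_p^t$ is the entire content of the theorem, and your sketch does not touch it. Observing that over $\C$ this is Freitag's theorem and that one needs a ``mod $p^m$ refinement'' is an accurate diagnosis, but producing that refinement is precisely what the proof in \cite{Bo-Ki2} does; an outline that names the hard step and defers it is not yet a proof.
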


\begin{Def}
Let $G$ be a formal power series of the form
\[G=\sum _{T\in \Lambda _{n}}a_{G}(T)q^T\]
with $a_{G}(T)\in \Q_p$ for all $T\in \Lambda _n$. 
Let $k_m$ be a number sequence of positive even integers such that $k_m\to \infty $ ($m\to \infty$) in the usual topology of $\mathbb{R}$.
For the sequence of the Siegel--Eisenstein series $E_{k_m}^{(n)}$ of weight $k_m$, we write 
\begin{align*}
\lim _{ m\to \infty} E_{k_m}^{(n)}=G\quad (p\text{-adic\ limit})
\end{align*}
if $v_p(E_{k_m}^{(n)}-G)\to \infty $ when $m\to \infty $.
Then we say that $G$ is a ``$p$-adic Siegel--Eisenstein series''. 
If $k_m\to (k,a)$ ($m\to \infty $) in ${\boldsymbol X}$, then we call $(k,a)$ the ``weight'' of $G$ and we write $G=\widetilde{E}_{(k,a)}^{(n)}$.  
\end{Def}
\section{Proof of the main theorem}
As in Introduction, we put ${\boldsymbol k}_0:=(k,k)$ with $k$ even and ${\boldsymbol k}_1:=(k,k+\frac{p-1}{2})$ with condition $k\equiv \frac{p-1}{2}$ mod $2$. 
For any $k_j(m)$ satisfying $k_j(m)\to {\boldsymbol k}_j$ ($m\to \infty $) in ${\boldsymbol X}$, 
we may assume that $k_j(m)$ is of the form
\[k_j(m)=k+a_j(m)p^{b(m)}, \]
where $a_j(m)$ is a positive integer with $a_j(m)\equiv \frac{p-1}{2^j}$ mod $p-1$, 
and $b(m)=b_j(m)$ is a positive integer satisfying $b(m)\to \infty $ if $m\to \infty$.  
Thereafter, whenever ${\boldsymbol k}_j$ or $k_j(m)$ is involved, 
the above condition on $k$ is always assumed.

For simplicity, we assume that $p$ is a regular prime when considering the $p$-adic Siegel--Eisenstein series $\widetilde{E}^{(n)}_{{\boldsymbol k}_j}$, 
but the actual condition we need is the following assumption. 
\begin{Assm}
\label{Assm:NV}
We suppose that 
\begin{align}
\label{eq:NVB}
\lim _{m\to \infty }\frac{B_{2k_j(m)-2i}}{2k_j(m)-2i}\neq 0 \quad (p\text{-adic\ limit})
\end{align}
for all $i$ with $k<i\le [n'/2]$ and $i\not \equiv k$ mod $\frac{p-1}{2}$.
Here $n'=n$ if $n>4k$ and $n'=4k$ if $n\le 4k$.  

This means, using the terminology of the $p$-adic $L$-function, 
\begin{align}
\label{eq:NVL}
(\lim _{m\to \infty }L_p(1-2k_j(m)+2i,\omega ^{2k-2i})=)\ L_p(1-2k+2i,\omega ^{2k-2i})\neq 0
\end{align}
for all $i$ with $k<i\le [n'/2]$ and $i\not \equiv k$ mod $\frac{p-1}{2}$. 
Here $L_p(s,\chi )$ is the Kubota--Leopoldt $p$-adic $L$-function and $\omega $ is the Teichm\"uller character. 
\end{Assm}
\begin{Rem}
\begin{enumerate} \setlength{\itemsep}{-3pt}
\item
The number sequence $\left\{\frac{B_{2k_j(m)-2i}}{2k_j(m)-2i}\right\}$ as above always converges to some number in $\Q_p$, 
because it is a Cauchy sequence. 
\item
If $p$ is a regular prime, then $p\nmid \frac{B_k}{k}$ for all even $k\ge 2$ with $k\not \equiv 0$ mod $p-1$ and therefore the condition (\ref{eq:NVB}) (and also (\ref{eq:NVL}))
holds for all $i>k$.  
\item
A condition $L_p(1+2i,\omega ^{-2i})\neq 0$ for all $i\ge 1$ is equivalent to the prediction called the ``higher version of the Leopoldt conjecture'' and 
seems to be a standard conjecture in Iwasawa theory (cf. \cite[Examples 2.19]{Fru}, \cite[Remark 3.2 ii)]{Kol}).
Again, the fact that $p$ is regular is a sufficient condition for the above (equivalence) condition to hold.
\end{enumerate}
\end{Rem}

\subsection{Siegel--Eisenstein series mod $p$-power}
\begin{Prop}
\label{prop:cong}
Let $n$ be a positive integer with $n\ge 4k$ and $p$ a regular prime with $p>2k+1$. 
\begin{enumerate}  \setlength{\itemsep}{-3pt}
\item
Let $2k<r\le n$. 
Then there exists a constant $C_r$ such that 
\[v^{(r)}_p(E^{(n)}_{k_j(m)})\ge C_r+b(m)\] 
holds for sufficiently large $m$.
\item
If we put $\nu _m:=v^{(2k)}_p(E^{(n)}_{k_j(m)})$, then $v ^{(r)}_p(E^{(n)}_{k_j(m)})\ge \nu _m $ for all $r$ with $0\le r< 2k$. 
\item
We have $\nu_m\le 0$ for all $m$. 
\end{enumerate}

In particular, each series $p^{-\nu_m} E^{(n)}_{k_j(m)}$ ($j=0$, $1$) is mod $p^{c(m)}$ singular with $p$-rank $2k$ for some $c(m)$ satisfying $c(m)\to \infty$ if $m\to \infty $. 
\end{Prop}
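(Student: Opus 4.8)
The plan is to prove the three numbered assertions separately and then read off the ``In particular'' statement by normalising with $p^{\nu_m}$. Throughout I write $K=k_j(m)=k+a_j(m)p^{b(m)}$ for the varying weight, and I use the reduction recalled before Theorem~\ref{thm:int}: a rank-$r$ coefficient $a_K^{(n)}(T)$ with $T\in\Lambda_n^{(r)}$ equals a full-rank coefficient $a_K^{(r)}(T')$ with $T'\in\Lambda_r^+$, so that $v_p^{(r)}(E_K^{(n)})=\min_{T'\in\Lambda_r^+}v_p(a_K^{(r)}(T'))$. The one arithmetic fact used everywhere is that $2K\equiv 2k \bmod(p-1)$: indeed $p^{b(m)}\equiv1\bmod(p-1)$ and $2a_j(m)\equiv 2\cdot\tfrac{p-1}{2^j}\equiv 0 \bmod(p-1)$ for $j=0,1$.

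For (1) I would analyse the Bernoulli factors in the coefficient formulas, using Theorem~\ref{thm:pri} for even $r$ and Theorem~\ref{thm:int} for odd $r$. When $r>2k$ the relevant product $\prod_i \tfrac{2K-2i}{B_{2K-2i}}$ (resp. $\prod_i \tfrac{K-i}{B_{2K-2i}}$) runs past $i=k$, and the single factor at $i=k$ is $\tfrac{2K-2k}{B_{2K-2k}}$ (resp. $\tfrac{K-k}{B_{2K-2k}}$); since $2K-2k=2a_j(m)p^{b(m)}$ has $v_p\ge b(m)$ while $(p-1)\mid 2K-2k$ forces $v_p(B_{2K-2k})=-1$ by von Staudt--Clausen, this factor carries $v_p\ge b(m)+1$. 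The crux is that every other factor has valuation bounded below independently of $m$: for $i<k$ one is in the stable range, for $k<i\le r/2\le[n'/2]$ it is precisely Assumption~\ref{Assm:NV} (guaranteed by regularity) that keeps $\tfrac{B_{2K-2i}}{2K-2i}$ away from $0$ $p$-adically, and the exceptional indices $i\equiv k\bmod\tfrac{p-1}{2}$ only contribute nonnegative valuation. Bounding $\tfrac{K}{B_K}$ and the generalised-Bernoulli factor likewise, and using that the local part $c_K^{(r)}(T)$ is integral, gives $v_p(a_K^{(r)}(T)^*)\ge C_r+b(m)$; the primitive-to-full relation~\eqref{eq:rel_pri}, being an integral combination, then yields (1). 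This Bernoulli/local bookkeeping is the main obstacle.

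Assertions (2) and (3) I would deduce cleanly from Theorem~\ref{thm:wt_sing} applied to $E_K^{(n)}\in M_K(\Gamma_n)_\Q$ (trivial, hence quadratic, character). For large even $K$ all Fourier coefficients of $E_K^{(n)}$ are nonzero, so every $v_p^{(r)}$ is finite. An increase $v_p^{(r+1)}(E_K^{(n)})=v_p^{(r)}(E_K^{(n)})+m'$ with $m'\ge1$ forces $r\equiv 2K\equiv 2k\bmod(p-1)$; but for $0\le r<2k$ the hypothesis $p>2k+1$ gives $p-1>2k$, so no such $r$ exists. Hence $v_p^{(0)}\ge v_p^{(1)}\ge\cdots\ge v_p^{(2k)}=\nu_m$ is non-increasing, which is exactly (2). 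Since the constant term of $E_K^{(n)}$ is $1$ we have $v_p^{(0)}=0$, and non-increasingness gives $\nu_m\le v_p^{(0)}=0$, which is (3).

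Finally, for the ``In particular'' claim I would set $G_m:=p^{-\nu_m}E_{k_j(m)}^{(n)}$ and $c(m):=\min_{2k<r\le n}\bigl(v_p^{(r)}(E_K^{(n)})-\nu_m\bigr)$. By (1) together with $\nu_m\le0$ we get $c(m)\ge \min_{2k<r\le n}C_r+b(m)\to\infty$, so every rank-$>2k$ coefficient of $G_m$ vanishes mod $p^{c(m)}$; by (2) and (3) all coefficients of $G_m$ are $p$-integral and $v_p^{(2k)}(G_m)=0$, so some rank-$2k$ coefficient is a $p$-adic unit. Thus $G_m$ is mod $p^{c(m)}$ singular of $p$-rank exactly $2k$ with $c(m)\to\infty$, as claimed.
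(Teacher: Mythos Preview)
Your argument for (2), (3), and the ``In particular'' conclusion matches the paper's proof essentially verbatim (the paper phrases (2) as a contradiction rather than as non-increasingness of $r\mapsto v_p^{(r)}$, but the content is identical). The only substantive difference is in (1): the paper works entirely with Theorem~\ref{thm:int}, bounding $v_p(a_K^{(r)}(T))$ via the \emph{$T$-independent} constant $c_{K,r}$ for both parities of $r$, whereas you route the even-$r$ case through Theorem~\ref{thm:pri} and the primitive coefficients $a_K^{(r)}(T)^*$.

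This detour introduces the factor $\dfrac{B_{K-r/2,\eta_T}}{K-r/2}$ in $b_K^{(r)}(T)$, which depends on $T$ through the character $\eta_T$, and your phrase ``bounding \dots\ the generalised-Bernoulli factor likewise'' does not actually establish a lower bound on $v_p(B_{K-r/2,\eta_T})$ that is \emph{uniform} over all quadratic characters $\eta_T$ (of arbitrary conductor). For the other Bernoulli factors the uniformity is automatic because they do not depend on $T$; here it is not, and the Kummer-type convergence you allude to only gives a $T$-dependent limit. The paper's use of Theorem~\ref{thm:int} avoids this issue entirely, since $c_{K,r}$ already packages the even-$r$ case into the $T$-free factor $1/D^{\bullet}_{2K-r}$, for which one checks directly that $v_p(1/D^{\bullet}_{2K-r})\ge -1-v_p(k-r/2)$ for large $m$. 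The simplest fix to your write-up is therefore to drop the appeal to Theorem~\ref{thm:pri} in (1) and use Theorem~\ref{thm:int} uniformly, exactly as the paper does; the primitive-coefficient formula is only needed later, at rank $r=2k$, where it is used for a different purpose.
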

\begin{Rem}
This proposition holds for $n>2k$. 
The properties (2), (3) hold even without regularity condition on $p$. 
When $n=2k+1$, the regularity condition on $p$ in (1) also is not necessary.
\end{Rem}

\begin{proof}
(1) Recall from Theorem \ref{thm:int} that 
\begin{align*}
a^{(r)}_{k_j(m)}(T)=\frac{k_j(m)}{B_{k_j(m)}}\cdot &\prod _{i=1}^{[\frac{r}{2}]}\frac{2k_j(m)-2i}{B_{2{k_j(m)}-2i}}\times \text{``some\ integer''}\\
&\times
\begin{cases} 
&1 \quad \text{if}\quad r\ \text{odd}\\
&\frac{1}{D^\bullet _{2{k_j(m)}-r}} \quad \text{if}\quad r\ \text{even}, 
\end{cases} 
\end{align*}
where ``$\bullet $'' is ``$*$'' or ``$**$'' according as $r\equiv 0$ mod $4$ or $r\equiv 2$ mod $4$. 

For the factor $\frac{k_j(m)}{B_{k_j(m)}}$, we have
\begin{align*}
&\lim _{m\to \infty } \frac{k_0(m)}{B_{k_0(m)}}=\frac{k}{(1-p^{k-1})B_{k}},\quad \lim _{m\to \infty}\frac{k_1(m)}{B_{k_1(m)}}= \frac{k}{B_{k,\chi _{p}}}. 
\end{align*}
The first formula can be obtained by Kummer's congruence. 
The second formula can be obtained by e.g. using properties of the Kubota--Leopoldt $p$-adic $L$-function (or see Serre \cite{Se}). 
In particular, $v_p \left(\frac{k_j(m)}{B_{k_j(m)}}\right)$ is a constant for sufficiently large $m$.

We consider the factor $\prod_{i=1}^{[r/2]} \frac{2k_j(m)-2i}{B_{2{k_j(m)}-2i}}$. 
Since $r>2k$, the case $i=k$ always appears in this product. 
Therefore we write as 
\[\prod_{i=1}^{[\frac{r}{2}]} \frac{2k_j(m)-2i}{B_{2{k_j(m)}-2i}}=\frac{2k_j(m)-2k}{B_{2{k_j(m)}-2k}}\cdot \prod_{i\neq k}\frac{2k_j(m)-2i}{B_{2{k_j(m)}-2i}}.\]
For the factor $\frac{2k_j(m)-2k}{B_{2{k_j(m)}-2k}}$, the von Staudt--Clausen theorem implies 
\begin{align*}
v_p \left(\frac{2k_j(m)-2k}{B_{2k_j(m)-2k}}\right)=v_p\left(\frac{2a_j(m)p^{b(m)}}{B_{2a_j(m)p^{b(m)}}}\right) \ge b(m)+1. 
\end{align*}
We need to prove that there exists a constant $C'_r$ such that $v_p\left(\prod_{\substack{i\neq k}}\frac{2k_j(m)-2i}{B_{2{k_j(m)}-2i}}\right)\ge C'_r$
holds for sufficiently large $m$.   

Let $i\not \equiv k$ mod $\frac{p-1}{2}$.  
If $i<k$, then Kummer's congruence yields 
\begin{align*}
\lim _{m\to \infty}\frac{2k_j(m)-2i}{B_{2k_j(m)-2i}}=\frac{2k-2i}{(1-p^{2k-2i-1})B_{2k-2i}}. 
\end{align*}
If $i>k$, the condition that $p$ is a regular prime is required here. 
This assumption implies the convergence of $\left\{\frac{2k_j(m)-2i}{B_{2k_j(m)-2i}}\right\}$. 
It follows from these arguments that $v_p\left(\frac{2k_j(m)-2i}{B_{2k_j(m)-2i}}\right)$ are constants for all $i$ with $i\not \equiv k$ mod $\frac{p-1}{2}$ when $m$ is sufficiently large. 

Let $i\equiv k$ mod $\frac{p-1}{2}$.  
The von Staudt--Clausen theorem implies 
\[v_p \left(\frac{2k_j(m)-2i}{B_{2k_j(m)-2i}}\right)\ge 1. \]
Therefore we get some constant $C'_r$ such that $v_p\left(\prod_{i\neq k}\frac{2k_j(m)-2i}{B_{2{k_j(m)}-2i}}\right)\ge C'_r$.

When $r$ is even, we must consider the factor $1/D^\bullet_{2k_j(m)-r}$.  
For this factor, we have 
\begin{align*}
v_p\left(\frac{1}{D^\bullet_{2k_j(m)-r}}\right) &\ge -1-v_p(k_j(m)-r/2)= -1-v_p(k-r/2)=:C''_r  
\end{align*} 
for sufficiently large $m$. 
Note that $v_p(k-r/2)<\infty $ because of $2k<r\le n$.

Summarizing these facts, we have some constant $C_r$ such that 
\begin{align*}
v_p(a^{(r)}_{k_j(m)}(T))\ge C_r+b(m)
\end{align*}
for all $T\in \Lambda_r^+$. 
In other words, we can find a constant $C_r$ such that 
\[v_p^{(r)}(E_{k_j(m)}^{(n)})\ge C_r+b(m). \]
This completes the proof of (1). 

\noindent
(2) Seeking a contradiction, we suppose that there exists $r$ with $0\le r<2k$ such that $v_p^{(r)}(E^{(n)}_{k_j(m)})<\nu _m$ and $v_p^{(r+1)}(E^{(n)}_{k_j(m)})=\nu _m$. 
By Theorem \ref{thm:wt_sing}, 
we should have $2k_j(m)-r\equiv 0$ mod $p-1$. 
This shows that $2k-r\equiv 0$ mod $p-1$, because of $2k_j(m)\equiv 2k$ mod $p-1$.
Since $p>2k+1$, this is impossible. 
Hence we obtain $v_p^{(r)}(E^{(n)}_{k_j(m)})\ge \nu _m$ for all $r$ with $0\le r<2k$. 
This proves (2).

\noindent 
(3) By looking at the constant term $a_{k_j(m)}(0_n)=1$, we obtain $v_p^{(0)}(E^{(n)}_{k_j(m)})=0$. 
By the property (2), we get $\nu _m \le 0$. Hence we obtain (3). 
\end{proof}

Using Theorem \ref{thm:structure}, at this stage we can prove the following. 
For $p>2k+1$ (a regular prime), $n\ge 4k$, and each $m\ge 1$, we have
\begin{align}
\label{eq:ETcong}
p^{-\nu _m}E_{k_j(m)}^{(n)}\equiv \sum _{\substack{S\in \Lambda _{2k}^+/{\rm GL}_{2k}(\Z)\\ {{\rm level}(S)\mid p^{e_m}}\\ \chi _S=\chi _p^j}} c^{(m)}_S \theta _S^{(n)} \bmod{p^{c(m)}}
\end{align}
for some $c(m)$ satisfying $c(m)\to \infty $ if $m\to \infty $.
Here, the condition on $S$ to satisfy $\chi _S=\chi_p^j$ can be obtained as follows. 
Since $a_j(m)\equiv \frac{p-1}{2^j}$ mod $p-1$, we can write as $a_j(m)=t_j(m)\cdot \frac{p-1}{2}$ for some $t_j(m)\in \Z$ such that $t_0(m)$ is even and $t_1(m)$ is odd.  
By Theorem \ref{thm:structure}, we have
\[\chi _S=\chi _p ^{t_j(m)}=\chi _p^j. \]

By the same type of arguments as in our refinement of Freitag's expansion (as in Section 7 of \cite{Bo-Ki2}), we see that 
\[c_S^{(m)}\equiv p^{-\nu _m}\cdot \frac{a_{k_j(m)}^{(2k)}(T)^*}{\epsilon (S)} \bmod{p^{c(m)}}. \]
Now we use that $a_{k_j(m)}^{(2k)}(S)^*$ depends only on the genus ${\rm gen}(S)$ of $S$ and we can rearrange the summation. 

Finally, by applying the Siegel $\Phi$-operator several times, 
we see that (\ref{eq:ETcong}) and above argument hold for all $n$.
Therefore we have the following property. 
\begin{Prop}
\label{prop:weak}
Let $n$ be a positive integer and $p$ a regular prime with $p>2k+1$.
Then we have
\begin{align}
\label{eq:eisen-theta}
p^{-\nu _m}E_{k_j(m)}^{(n)}\equiv \sum _{\substack{{\rm gen}(S)\\{\rm level}(S)\mid p^{e_m}\\ \chi _S=\chi _p^j}}a_{k_j(m)}^{(2k)}({\rm gen}(S))^*\cdot (\Theta ^{(n)}_{{\rm gen}(S)})^0 \bmod{p^{c(m)}}, 
\end{align}
where the summation in (\ref{eq:eisen-theta})
goes over finitely many genera ${\rm gen}(S)$  of $S\in \Lambda _{2k}^+/{\rm GL}_{2k}(\Z)$ with ${\rm level}(S)\mid p^{e_m}$ 
such that $\chi _S=\chi _p^j$. 
\end{Prop}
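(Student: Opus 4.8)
The plan is to feed the output of Proposition~\ref{prop:cong} into the structure theorem and then turn the abstract coefficients it produces into explicit Fourier data. I would first work in the range $n\ge 4k$, where the hypothesis $n\ge 2r$ of Theorem~\ref{thm:structure} is met with the even integer $r=2k$, while $p>r+1=2k+1$ is part of the assumption. By Proposition~\ref{prop:cong} the series $p^{-\nu_m}E_{k_j(m)}^{(n)}$ is $p$-integral, lies in $M_{k_j(m)}(\Gamma_n)_{\Z_{(p)}}$, and is mod $p^{c(m)}$ singular of $p$-rank $2k$ for some $c(m)\to\infty$. Theorem~\ref{thm:structure} then produces, for each $m$, an integer $e_m\ge 0$ and $p$-integral constants $c_S^{(m)}$ with
\begin{align*}
p^{-\nu_m}E_{k_j(m)}^{(n)}\equiv\sum_{\substack{S\in\Lambda_{2k}^+/{\rm GL}_{2k}(\Z)\\ {\rm level}(S)\mid p^{e_m}\\ \chi_S=\chi_p^{t}}}c_S^{(m)}\,\theta_S^{(n)}\bmod{p^{c(m)}},
\end{align*}
where $t$ is determined by $k_j(m)-k=t\cdot\tfrac{p-1}{2}\cdot p^{c(m)-1}$.

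Next I would fix the character exponent. Writing $k_j(m)=k+a_j(m)p^{b(m)}$ with $a_j(m)=t_j(m)\cdot\tfrac{p-1}{2}$, where $t_0(m)$ is even and $t_1(m)$ is odd, comparison with the defining relation for $t$ gives $t=t_j(m)\,p^{\,b(m)-c(m)+1}$; since $p$ is odd this forces $t\equiv t_j(m)\equiv j\bmod 2$, whence $\chi_S=\chi_p^{t}=\chi_p^{j}$. This installs the character condition appearing in the summation of (\ref{eq:eisen-theta}).

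The decisive step is to make $c_S^{(m)}$ explicit and to recognize genus theta series. Here I would use the refinement of the Freitag expansion from Section~7 of \cite{Bo-Ki2}, which reads off the coefficients of a form congruent to a combination of degree-$n$ theta series from the primitive Fourier coefficients of its degree-$2k$ truncation; applied to the congruence above it yields
\begin{align*}
c_S^{(m)}\equiv p^{-\nu_m}\cdot\frac{a_{k_j(m)}^{(2k)}(S)^*}{\epsilon(S)}\bmod{p^{c(m)}}.
\end{align*}
I would then invoke Theorem~\ref{thm:pri}: both the Bernoulli part and the local part of $a_{k_j(m)}^{(2k)}(S)^*$ are built only from local invariants of $S$, so this primitive coefficient is constant along each genus. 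Grouping the sum by genus and using $\sum_{S\in{\rm gen}(S)}\theta_S^{(n)}/\epsilon(S)=(\Theta^{(n)}_{{\rm gen}(S)})^0$ then collapses the theta sum into the asserted linear combination of genus theta series with common coefficient $a_{k_j(m)}^{(2k)}({\rm gen}(S))^*$, giving (\ref{eq:eisen-theta}) for $n\ge 4k$.

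Finally I would remove the restriction $n\ge 4k$ by applying the Siegel $\Phi$-operator. Since $\Phi$ sends $E_{k_j(m)}^{(n)}\mapsto E_{k_j(m)}^{(n-1)}$ and $\theta_S^{(n)}\mapsto\theta_S^{(n-1)}$ and respects congruences modulo $p^{c(m)}$, while the coefficients $a_{k_j(m)}^{(2k)}({\rm gen}(S))^*$ are degree-$2k$ quantities independent of the ambient degree, the identity established for large $n$ descends to every $n$. I expect the coefficient identification to be the main obstacle: one must verify that testing the congruence against primitive $T$ recovers $a_{k_j(m)}^{(2k)}(S)^*/\epsilon(S)$ exactly modulo $p^{c(m)}$, which is precisely what the bookkeeping in the Freitag-type expansion of \cite{Bo-Ki2} supplies. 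Once that is in place, the genus-invariance furnished by Theorem~\ref{thm:pri} and the $\Phi$-descent are comparatively routine.
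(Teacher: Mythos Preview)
Your proposal is correct and follows essentially the same route as the paper: apply Theorem~\ref{thm:structure} in the range $n\ge 4k$, pin down the character via the parity of the integer $t$, identify the coefficients $c_S^{(m)}$ with $p^{-\nu_m}a_{k_j(m)}^{(2k)}(S)^*/\epsilon(S)$ through the refined Freitag expansion of \cite{Bo-Ki2}, regroup by genus using the genus-invariance of the primitive Fourier coefficient, and then descend to arbitrary $n$ by the Siegel $\Phi$-operator. Your treatment of the character step is in fact slightly more explicit than the paper's, which simply asserts $\chi_S=\chi_p^{t_j(m)}$ without writing out the relation $t=t_j(m)p^{b(m)-c(m)+1}$.
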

\begin{Rem}
Actually, $a_{k_j(m)}^{(2k)}({\rm gen}(S))^*$ would only depend on $\det S$ and the equivalence class of $S$ mod $p$, 
but this is possibly the same (there is only one genus of level $p$ and fixed determinant).
\end{Rem}

We will see that $a_{k_j(m)}^{(2k)}({\rm gen}(S))^*\equiv 0$ mod $p^{c(m)}$ unless ${\rm level}(S)\mid p$
in Subsection \ref{ber_part} and \ref{local_part}.
This would imply that $e_m=1$ for sufficiently large $m$.  

To prove $e_m=1$, besides checking the primitive Fourier coefficients, we also have another method. 
This is to use a property that the Siegel--Eisenstein series is a Hecke eigenfunction for the Hecke operator denoted by $T_n(p)$, which is shown in Section \ref{sec:4}.

\subsection{$p$-Adic limit of the Bernoulli part}
\label{ber_part}
The next purpose is to show that $\nu_m$ is constant for sufficiently large $m$ and the $p$-adic convergence of $a_{k_j(m)}^{(2k)}({\rm gen}(S))^*$ in (\ref{eq:eisen-theta}). 
We remark that this implies the $p$-adic convergence of $E_{k_j(m)}^{(n)}$, because of (\ref{eq:eisen-theta}). 
For this purpose, we need more detailed information on the primitive Fourier coefficients $a_{k_j(m)}^{(2k)}(T)^*$ with $T\in \Lambda _{2k}^+$.
By Theorem \ref{thm:pri}, we have
\begin{align*}
a_{k_j(m)}^{(2k)}(T)^*=b_{k_j(m)}^{(2k)}(T)\cdot c_{k_j(m)}^{(2k)}(T) 
\end{align*}
with $c_{k_j(m)}^{(2k)}(T)\in \Z$ and 
\[b_{k_j(m)}^{(2k)}(T):=(-1)^k\cdot 2^{k}\cdot \frac{k_j(m)}{B_{k_j(m)}}\cdot \frac{B_{k_j(m)-k,\eta _T}}{k_j(m)-k} \cdot \prod _{i=1}^{k}\frac{2k-2i}{B_{2k_j(m)-2i}}. \]
In this subsection, we analyze this $b_{k_j(m)}^{(2k)}(T)$.
In the following, the symbol ``$\square$'' denotes the square of an integer.
\begin{Prop}
\label{lim}
Let $p>2k+1$. 
\begin{enumerate} \setlength{\itemsep}{-3pt}
\item
If $\det (2T)\neq \square $ then 
\[\lim _{m\to \infty }b_{k_0(m)}^{(2k)}(T)=0\quad \text{and\ hence}\quad \lim _{m\to \infty }a_{k_0(m)}^{(2k)}(T)^*=0. \]

If $\det (2T)=\square$ then 
\begin{align*}
\lim _{m\to \infty }b_{k_0(m)}^{(2k)}(T)&=2^{2k}\cdot \frac{k}{(1-p^{k-1})B_{k}} \cdot \prod _{i=1}^{k-1}\frac{k-i}{(1-p^{2k-2i-1})B_{2k-2i}}. 
\end{align*}

\item
If $\det (2T)\neq p \times \square$ then 
\[\lim _{m\to \infty }b_{k_1(m)}^{(2k)}(T)=0\quad \text{and\ hence}\quad \lim _{m\to \infty }a_{k_1(m)}^{(2k)}(T)^*=0. \]
If $\det (2T)=p \times \square$ then 
\begin{align*}
\lim _{m\to \infty }b_{k_1(m)}^{(2k)}(T)&=(-1)^k\cdot 2^{2k}\cdot \frac{k}{B_{k,\chi _p}} \cdot \prod _{i=1}^{k-1}\frac{k-i}{(1-p^{2k-2i-1})B_{2k-2i}}.  
\end{align*}
\end{enumerate}
Furthermore these convergence properties are uniform in $T$. 
\end{Prop}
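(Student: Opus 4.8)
The plan is to split the Bernoulli part into three groups of factors whose $p$-adic limits can be analysed separately, and then to multiply the limits. Set $\ell := k_j(m)-k = a_j(m)p^{b(m)}$, so that $\ell\to 0$ in $\Z_p$ while $\ell\equiv \frac{p-1}{2^j}\bmod (p-1)$. Specializing Theorem \ref{thm:pri} to weight $k_j(m)$ (so the product numerators read $2k_j(m)-2i$) and isolating the single index $i=k$, whose numerator $2k_j(m)-2k=2\ell$ tends to $0$, I would regroup
\[
b_{k_j(m)}^{(2k)}(T) = (-1)^k 2^k\cdot \underbrace{\frac{k_j(m)}{B_{k_j(m)}}}_{(\mathrm A)}\cdot \underbrace{\frac{B_{\ell,\eta_T}}{\ell}\cdot\frac{2\ell}{B_{2\ell}}}_{(\mathrm B)}\cdot \underbrace{\prod_{i=1}^{k-1}\frac{2k_j(m)-2i}{B_{2k_j(m)-2i}}}_{(\mathrm C)}.
\]
Factor $(\mathrm A)$ is exactly the quantity whose limit was already computed in the proof of Proposition \ref{prop:cong}, namely $\frac{k}{(1-p^{k-1})B_k}$ for $j=0$ and $\frac{k}{B_{k,\chi_p}}$ for $j=1$. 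For the product $(\mathrm C)$ every index obeys $0<2k-2i\le 2k-2<p-1$, so $2k_j(m)-2i\equiv 2k-2i\not\equiv 0\bmod(p-1)$; Kummer's congruence then yields the termwise limit $\frac{2k_j(m)-2i}{B_{2k_j(m)-2i}}\to\frac{2k-2i}{(1-p^{2k-2i-1})B_{2k-2i}}$, so that $(\mathrm C)\to 2^{k-1}\prod_{i=1}^{k-1}\frac{k-i}{(1-p^{2k-2i-1})B_{2k-2i}}$. No regularity hypothesis enters here, since all indices $2k-2i$ stay below $p-1$.

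The heart of the argument is the critical pair $(\mathrm B)=2\,B_{\ell,\eta_T}/B_{2\ell}$, an apparent indeterminate since $2\ell\to0$. As $2\ell\equiv 0\bmod(p-1)$ for both $j$, the von Staudt--Clausen theorem gives $v_p(B_{2\ell})=-1$. The decisive input is the interpolation of the numerator by the Kubota--Leopoldt $p$-adic $L$-function: with $\psi:=\eta_T\omega^{\ell}$ one has $\frac{B_{\ell,\eta_T}}{\ell}=-\,(1-\eta_T(p)p^{\ell-1})^{-1}L_p(1-\ell,\psi)$, the Euler factor tending to $1$. Since $\omega^{\ell}=\omega^0$ for $j=0$ and $\omega^{\ell}=\omega^{(p-1)/2}=\chi_p$ for $j=1$, the branch $\psi$ is trivial exactly when $\eta_T=\mathbf 1$ (resp. $\eta_T=\chi_p$). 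By (\ref{eq:char}) and quadratic reciprocity, $\eta_T=\mathbf 1\iff (-1)^k\det 2T=\square$ and $\eta_T=\chi_p\iff (-1)^k\det 2T=(-1)^{(p-1)/2}p\cdot\square$; the parity assumptions ($k$ even for $j=0$, $k\equiv\frac{p-1}{2}\bmod2$ for $j=1$) convert these into $\det 2T=\square$ and $\det 2T=p\times\square$ respectively. If $\psi$ is nontrivial, $L_p(\,\cdot\,,\psi)$ is $p$-integral, whence $v_p(B_{\ell,\eta_T})\ge v_p(\ell)\to\infty$ and $v_p((\mathrm B))\ge v_p(\ell)+1\to\infty$, so $(\mathrm B)\to0$; combined with the boundedness of $(\mathrm A),(\mathrm C)$ and the integrality of $c_{k_j(m)}^{(2k)}(T)$ this gives the vanishing halves, including $\lim a_{k_j(m)}^{(2k)}(T)^*=0$. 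If $\psi$ is trivial, then after cancelling Euler factors tending to $1$ one has $(\mathrm B)=\zeta_p(1-\ell)/\zeta_p(1-2\ell)$ up to a unit tending to $1$, and I would read off the limit from the simple pole of $\zeta_p$ at $s=1$: since $\zeta_p(1-t)\sim\frac{1-p^{-1}}{-t}$ as $t\to0$, the ratio tends to $2$, whence $(\mathrm B)\to2$.

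Assembling the three limits, the constant $(-1)^k2^k\cdot 2\cdot 2^{k-1}=(-1)^k2^{2k}$ together with the remaining products reproduces the two displayed formulas (the sign $(-1)^k$ surviving only for $j=1$, where $k$ may be odd). For the uniformity assertion I would note that all $T$-dependence sits in $(\mathrm B)$: the bound $v_p(B_{2\ell})=-1$ is $T$-independent, and for nontrivial $\psi$ the values $L_p(1-\ell,\psi)$ are $p$-integral uniformly in the character, so $v_p((\mathrm B))\ge v_p(\ell)+1$ holds uniformly, while in the two nonvanishing cases the limit is a single $T$-independent constant. I expect the genuine obstacle to be the critical pair: arranging the pole/residue bookkeeping of $\zeta_p$ at $s=1$ so that the two valuations $v_p(B_{\ell,\eta_T})=v_p(B_{2\ell})=-1$ cancel to the exact factor $2$, and faithfully translating the triviality of $\psi=\eta_T\omega^{\ell}$ into the stated condition on $\det 2T$ --- which requires quadratic reciprocity, the parity of $k$, and the parity constraint $\eta_T(-1)=(-1)^{\ell}$ ensuring $B_{\ell,\eta_T}\ne0$.
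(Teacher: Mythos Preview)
Your proof is correct and follows the same three-factor decomposition $(\mathrm A)\cdot(\mathrm B)\cdot(\mathrm C)$ as the paper, with identical treatment of $(\mathrm A)$ and $(\mathrm C)$. The only difference lies in the critical pair $(\mathrm B)$: the paper does not argue directly but quotes the congruences
\[
\frac{B_{\ell,\eta_T}}{\ell}\cdot\frac{2\ell}{B_{2\ell}}\equiv\begin{cases}0\\2\end{cases}\pmod{p^{b(m)+1}}
\]
from Katsurada--Nagaoka \cite[p.~10]{Kat-Na2} (case $j=0$) and Nagaoka \cite[p.~241]{Na} (case $j=1$), whereas you rederive these from the interpolation formula, the $p$-integrality of $L_p(\,\cdot\,,\psi)$ on the nontrivial branch, and the simple pole of $\zeta_p$ at $s=1$ on the trivial branch. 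Your route is more self-contained and makes the uniformity in $T$ transparent (everything reduces to the single bound $v_p((\mathrm B))\ge v_p(\ell)+1$), while the paper's citation yields the slightly sharper explicit congruence modulo $p^{b(m)+1}$; the underlying mathematics is the same.
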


\begin{proof}
(1) By the same calculation as in the proof of Proposition \ref{prop:cong}, for any $i$ with $1\le i<k$ we have,  
\begin{align*}
&\lim _{m\to \infty } \frac{k_0(m)}{B_{k_0(m)}}=\frac{k}{(1-p^{k-1})B_{k}},\quad \lim _{m\to \infty } \frac{2k_0(m)-2i}{B_{2k_0(m)-2i}}=\frac{2k-2i}{(1-p^{2k-2i-1})B_{2k-2i}}. 
\end{align*}
Hence we consider the factor 
\[\frac{B_{k_0(m)-k,\eta _T}}{k_0(m)-k}\times \frac{2k_0(m)-2k}{B_{2k_0(m)-2k}}=\frac{B_{a_0(m)p^{b(m)},\eta _T}}{a_0(m)p^{b(m)}} \times \frac{2a_0(m)p^{b(m)}}{B_{2a_0(m)p^{b(m)}}}.\]
By the same argument as in Katsurada--Nagaoka \cite[page 10]{Kat-Na2}, we have  
\begin{align}
\nonumber
\label{eq:BC1}
\frac{B_{a_0(m)p^{b(m)},\eta _T}}{a_0(m)p^{b(m)}} &\times \frac{2a_0(m)p^{b(m)}}{B_{2a_0(m)p^{b(m)}}}\\ 
&\equiv 
\begin{cases}
0 \quad \text{if}\quad \eta _T\neq {\boldsymbol 1} \quad (\iff \det (2T)\neq \square)\\
2\quad \text{if}\quad \eta _T={\boldsymbol 1} \quad (\iff \det (2T)= \square)  
\end{cases}
\bmod{p^{b(m)+1}}. 
\end{align}
Here ${\boldsymbol 1}$ is the trivial character mod $1$.
This implies 
\begin{align*}
\lim_{m\to \infty }
\frac{B_{a_0(m)p^{b(m)},\eta _T}}{a_0(m)p^{b(m)}} &\times \frac{2a_0(m)p^{b(m)}}{B_{2a_0(m)p^{b(m)}}}\\
&=
\begin{cases}
0 \quad \text{if}\quad \eta _T\neq {\boldsymbol 1} \quad (\iff \det (2T)\neq \square),\\
2\quad \text{if}\quad \eta _T={\boldsymbol 1} \quad (\iff \det (2T)= \square). 
\end{cases}
\end{align*}
Summarizing above facts, if $\det (2T)\neq \square$ then
$\lim _{m\to \infty }b_{k_0(m)}^{(2k)}(T)=0$, and if $\det (2T)=\square$ then
\begin{align*}
\lim _{m\to \infty }b_{k_0(m)}^{(2k)}(T)&=2^{2k}\cdot \frac{k}{(1-p^{k-1})B_{k}} \cdot \prod _{i=1}^{k-1}\frac{k-i}{(1-p^{2k-2i-1})B_{2k-2i}}. 
\end{align*}
From (\ref{eq:BC1}), this convergence is uniform in $T$. 
 
\noindent
(2) By the same calculation as in the proof of Proposition \ref{prop:cong}, for any $i$ with $1\le i<k$, we have  
\begin{align*}
\lim _{m\to \infty } \frac{k_1(m)}{B_{k_1(m)}}=\frac{k}{B_{k,\chi _p}},\quad \lim _{m\to \infty } \frac{2k_1(m)-2i}{B_{2k_1(m)-2i}}=\frac{2k-2i}{(1-p^{2k-2i-1})B_{2k-2i}}. 
\end{align*}
We consider the factor 
\[\frac{B_{k_1(m)-k,\eta _T}}{k_1(m)-k}\times \frac{2k_1(m)-2k}{B_{2k_1(m)-2k}}=2\cdot \frac{B_{a_1(m)p^{b(m)},\eta _T}}{B_{2a_1(m)p^{b(m)}}}. \]
By the same argument as in Nagaoka \cite[page 241]{Na}, we have  
\begin{align}
\label{eq:BC2}
2\cdot \frac{B_{a_1(m)p^{b(m)},\eta _T}}{B_{2a_1(m)p^{b(m)}}}\equiv 
\begin{cases}
0 \quad \text{if}\quad \eta _T\neq \chi _p \quad (\iff \det (2T)\neq p\times \square)\\
2 \quad \text{if}\quad \eta _T=\chi _p \quad (\iff \det (2T)= p\times \square) 
\end{cases}
\bmod{p^{b(m)+1}}. 
\end{align}
This implies 
\begin{align*}
\lim _{m\to \infty } 2\cdot \frac{B_{a_1(m)p^{b(m)},\eta _T}}{B_{2a_1(m)p^{b(m)}}}=
\begin{cases}
0 \quad \text{if}\quad \eta _T\neq \chi _p \quad (\iff \det (2T)\neq p\times \square),\\
2 \quad \text{if}\quad \eta _T=\chi _p \quad (\iff \det (2T)= p\times \square). 
\end{cases}
\end{align*}
Summarizing above facts, if $\det (2T)\neq p \times \square$ then
$\lim _{m\to \infty }b_{k_j(m)}^{(2k)}(T)=0$, and if $\det (2T)=p\times \square$ then
\begin{align*}
\lim _{m\to \infty }b_{k_j(m)}^{(2k)}(T)&=(-1)^k\cdot  2^{2k}\cdot \frac{k}{B_{k,\chi _p}} \cdot \prod _{i=1}^{k-1}\frac{k-i}{(1-p^{2k-2i-1})B_{2k-2i}}. 
\end{align*}
From (\ref{eq:BC2}), this convergence is uniform in $T$. 
\end{proof}

\subsection{$p$-Adic limit of the local part}
\label{local_part}
In this subsection, we analyze the local part $c_{k_j(m)}^{(2k)}(T)$ for which $a_{k_j(m)}^{(2k)}(T)^*$ may not be zero $p$-adically. Recall from Theorem \ref{thm:pri} that $c_{k_j(m)}^{(2k)}(T)$ is given explicitly by  
\begin{align*}
c_{k_j(m)}^{(2k)}(T)= (\det (2T) f_T^{-1})^{k_j(m)-\frac{2k+1}{2}}\prod _{q\mid \det(2T)}(1-\eta _T(q)q^{k-k_j(m)})B^*_q(J^{2k_j(m)},T), 
\end{align*}
with 
\[
B^*_q(J^{2k_j(m)},T)
=\begin{cases}
&\prod _{i=1}^{\frac{s-1}{2}}(1-q^{2i+2k-2k_j(m)}) \quad \text{if}\quad s\ \text{odd},\\
&(1+\lambda _q(T)q^{\frac{2k+s-2k_j(m)}{2}})\prod_{i=1}^{\frac{s}{2}-1}(1-q^{2i+2k-2k_j(m)}) \quad \text{if}\quad s\ \text{even}. 
\end{cases}
\]
We collect some properties of $c_{k_j(m)}^{(2k)}(T)$ in the following. 
\begin{description}  \setlength{\itemsep}{-3pt}
\item[Case $j=0$.]
For $T\in \Lambda _{2k}^+$ with $\lim_{m\to \infty }a_{k_j(m)}^{(2k)}(T)\neq 0$, we have $\det (2T)
=\square $, by Proposition \ref{lim}.  
In particular, in this case $\eta _T={\boldsymbol 1}$ and $f_T=1$.  
\begin{itemize}  \setlength{\itemsep}{-3pt}
\item
If $T$ has level $p$ and $\det (2T)=p^{2d}$ with $2d=s$, then 
\[\lim _{m\to \infty }c_{k_0(m)}^{(2k)}(T)=(-1)^{\frac{s}{2}}\lambda _p(T)p^{(\frac{s}{2}-1)\frac{s}{2}}. \]
\item
If $2k\equiv 0$ mod $8$, then there exists $T_0\in \Lambda _{2k}^+$ with $\det (2T_0)=1$. 
For such $T_0$ we have $c_{k_0(m)}^{(2k)}(T_0)=1$.
\item
If $2k\equiv 4$ mod $8$, then there exists $T_2\in \Lambda _{2k}^+$ with level $p$ and $\det (2T_2)=p^2$.
We get in this case 
\begin{align*}
c_{k_0(m)}^{(2k)}(T)&=p^{2k_0(m)-2k-1}(1-p^{k-k_0(m)})(1+\lambda _p(T)p^{\frac{2k+2-2k_0(m)}{2}})\\
&=-\lambda _p(T)+p^{k_0(m)}\times \text{``some\ integer''}. 
\end{align*}
\item
If a prime $q$ divides $\det (2T)$ with $q^{2t}\; ||\; \det (2T)$ ($t\ge 1$) then $\eta _T(q)=1$
yields the factor $1-q^{k-k_0(m)}$. 
In view of $k-k_0(m)=-a_0(m)p^{b(m)}$ this gives (by Euler) a congruence
\[c_{k_0(m)}^{(2k)}(T)\equiv 0\bmod{p^{b(m)+1}}. \]
\item
If $T$ has level $p^t$ with $t\ge 2$, then $\lim _{m\to \infty } c_{k_0(m)}^{(2k)}(T)=0$.
We can confirm this as follows. 
We compare the formulas for $c_{k_0(m)}^{(2k)}(T)$ for such $T$ with $c_{k_0(m)}^{(2k)}(T')$,
where $T'$ has level $p$ and the same rank over $\F_p$ as $T$. 
The two formulas then just differ by the determinant part in front, which gives
\[c_{k_0(m)}^{(2k)}(T)=p^a c_{k_0(m)}^{(2k)}(T')\]
with $a\ge k_0(m)-\frac{2k+1}{2}$. 
This gives the claim, when combined with item 1. 
\end{itemize}

\item[Case $j=1$.] 
For $T\in \Lambda _{2k}^+$ with $\lim_{m\to \infty }a_{k_1(m)}^{(2k)}(T)\neq 0$, we have $\det (2T)
=p\times \square $, by Proposition \ref{lim}. 
In this case, we have $\eta _T=\chi _p$, $f_T=p$.  
\begin{itemize}  \setlength{\itemsep}{-3pt}
\item
If $T$ has level $p$ and $\det (2T)=p^{1+2d}$ with $1+2d=s$, then 
\[\lim _{m\to \infty }c_{k_1(m)}^{(2k)}(T)=(-1)^{\frac{s-1}{2}}p^{(\frac{s-1}{2})^2}. \]
\item
There exists $T_1\in \Lambda _{2k}^+$ with level $p$ and $\det (2T_1)=p$. 
For such $T_1$ we have $c_{k_1(m)}^{(2k)}(T_1)=1$.

\item
If a prime $q$ divides $\det (2T)$ with $q^{2t}\; ||\; \det (2T)$ ($t\ge 1$) then 
we have 
\[1-\eta _T(q)q^{k-k_1(m)}=1-\chi_p (q)q^{-a_1(m)p^{b(m)}}\equiv 0 \bmod{p^{b(m)+1}}\] 
because of $q^{-a_1(m)p^{b(m)}}\equiv \chi_p(q)$ mod $p^{b(m)+1}$. 
This implies that 
\[c_{k_1(m)}^{(2k)}(T)\equiv 0\bmod{p^{b(m)+1}}. \]
\item
If $T$ has level $p^t$ with $t\ge 2$, then $\lim _{m\to \infty } c_{k_1(m)}^{(2k)}(T)=0$.
The reason is the same as item 5 for the case of $j=0$. 
\end{itemize}
\end{description}

As a conclusion of these observations, we have the following statements: 
We put 
\begin{align*}
\mu_0:=v_p\left(2^{2k}\cdot \frac{k}{(1-p^{k-1})B_{k}} \cdot \prod _{i=1}^{k-1}\frac{k-i}{(1-p^{2k-2i-1})B_{2k-2i}}\right),\\
\mu_1:=v_p\left((-1)^k\cdot 2^{2k}\cdot \frac{k}{B_{k,\chi _p}} \cdot \prod _{i=1}^{k-1}\frac{k-i}{(1-p^{2k-2i-1})B_{2k-2i}}\right). 
\end{align*}
For any $T\in \Lambda _{2k}^+$ and for large $m$, we claim that
\[v_p(a_{k_j(m)}^{(2k)}(T)^*)\ge \mu _j\]
since $a_{k_j(m)}^{(2k)}(T)^*=b_{k_j(m)}^{(2k)}(T)\cdot c_{k_j(m)}^{(2k)}(T)$ with $c_{k_j(m)}^{(2k)}(T)\in \Z$, 
and by Proposition \ref{lim}.  
Then we have also $v_p(a_{k_j(m)}^{(2k)}(T))\ge \mu _j$ because of (\ref{eq:rel_pri}). 
Furthermore, we have
\begin{align*}
&v_p(a^{(2k)}_{k_0(m)}(T_0))=\mu _0 \quad \text{if}\quad k\equiv 0 \bmod{4}, \\
&v_p(a^{(2k)}_{k_0(m)}(T_2))=\mu _0 \quad \text{if}\quad k\equiv 2 \bmod{4}, \\
&v_p(a^{(2k)}_{k_1(m)}(T_1))=\mu _1, 
\end{align*}
for $T_l\in \Lambda _{2k}^+$ ($l=0$, $1$, $2$) with $\det (2T_l)=p^l$. 
(The existence of them in each cases has already been discussed above.) 
This is because in these cases the primitive Fourier coefficient and the Fourier coefficient coincide.
Therefore we have $v_p^{(2k)}(E_{k_j(m)}^{(n)})=\mu _j$ for sufficiently large $m$. 

The above discussion yields the following assertion, which is a more precise version of Theorem \ref{thm:main}.
\begin{Thm}
\label{thm:coin}
Let $n$ be positive integer and $p$ a regular prime with $p>2k+1$.
\begin{enumerate} \setlength{\itemsep}{-3pt}
\item
Let $n\ge 2k$. Then we have $(\nu_m=)$ $v_p^{(2k)}(E_{k_j(m)}^{(n)})=\mu _j$ for sufficiently large $m$. 
Namely we have $\nu _m \to \mu _j$ ($m\to \infty$) for each $j$. 
\item
For sufficiently large $m$, each $p^{-\mu _j}E_{k_j(m)}^{(n)}\in M_{k_j(m)}(\Gamma _n)_{\Z_{(p)}}$ ($j=0$, $1$) is mod $p^{c(m)}$ singular of $p$-rank $2k$ and we have
\begin{align*}
p^{-\mu _j}E_{k_j(m)}^{(n)}\equiv 
\sum _{\substack{{\rm gen}(S)\\{\rm level}(S)\mid p\\ \chi _S=\chi _p^j}}
p^{-\mu _j}a_{k_j(m)}^{(2k)}({\rm gen}(S))^*\cdot (\Theta ^{(n)}_{{\rm gen}(S)})^0 \bmod{p^{c(m)}}, 
\end{align*}
where $c(m)$ is a certain positive integer satisfying $c(m)\to \infty $ if $m\to \infty$. 
Here the summation goes over finitely many genera ${\rm gen}(S)$ of $S\in \Lambda_{2k}^+/{\rm GL}_{2k}(\Z)$ such that ${\rm level}(S)\mid p$ and $\chi _S=\chi _p^j$. 
\item
The sequence $E_{k_j(m)}^{(n)}$ converges $p$-adically and we have
\begin{align}
\label{eq:sum}
\lim_{m\to \infty }E_{k_j(m)}^{(n)}= \sum _{\substack{{\rm gen}(S)\\ {\rm level}(S)\mid p\\ \chi _S=\chi _p^j}}a_j({\rm gen}(S))\cdot (\Theta ^{(n)}_{{\rm gen}(S)})^0, 
\end{align}
where 
\[a_j({\rm gen}(S)):=\lim _{m\to \infty }a_{k_j(m)}^{(2k)}({\rm gen}(S))^*\in \Q. \]
More explicitly, for $S\in \Lambda _{2k}^+$ with ${\rm level}(S)\mid p$, $\det (2S)=p^{s}$ and
rank $2k-s$ over $\F_p$, we have
\begin{align*}
&a_0({\rm gen}(S))\\
&~~~~~=2^{2k}\cdot \frac{k}{(1-p^{k-1})B_{k}} \cdot \prod _{i=1}^{k-1}\frac{k-i}{(1-p^{2k-2i-1})B_{2k-2i}} 
\cdot
(-1)^{\frac{s}{2}}\lambda _p(T)p^{(\frac{s}{2}-1)\frac{s}{2}},\\
&a_1({\rm gen}(S))\\
&~~~~~=(-1)^k\cdot 2^{2k}\cdot \frac{k}{B_{k,\chi _p}} \cdot \prod _{i=1}^{k-1}\frac{k-i}{(1-p^{2k-2i-1})B_{2k-2i}}  
\cdot (-1)^{\frac{s-1}{2}}p^{(\frac{s-1}{2})^2}.
\end{align*} 
In particular, the $p$-adic Siegel--Eisenstein series $\widetilde{E}_{{\boldsymbol k}_j}^{(n)}$ does not depend on the 
choice of $k_j(m)$ satisfying $k_j(m)\to {\boldsymbol k}_j$ ($m\to \infty$) in ${\boldsymbol X}$. 
\end{enumerate}
\end{Thm}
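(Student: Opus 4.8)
The plan is to assemble the three assertions from the material built up in Subsections~\ref{ber_part} and \ref{local_part}, proving them in the stated order, since each part feeds the next. Throughout I would work with the factorization $a_{k_j(m)}^{(2k)}(T)^*=b_{k_j(m)}^{(2k)}(T)\cdot c_{k_j(m)}^{(2k)}(T)$ of Theorem~\ref{thm:pri}, together with the uniform limits of the Bernoulli part from Proposition~\ref{lim} and the local computations collected just above the statement.

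For part (1) the lower bound is essentially in hand: since $c_{k_j(m)}^{(2k)}(T)\in\Z$ and $v_p(b_{k_j(m)}^{(2k)}(T))\ge\mu_j$ uniformly in $T$ for large $m$ by Proposition~\ref{lim}, one gets $v_p(a_{k_j(m)}^{(2k)}(T)^*)\ge\mu_j$, and via the defining relation (\ref{eq:rel_pri}) also $v_p(a_{k_j(m)}^{(2k)}(T))\ge\mu_j$ for every $T\in\Lambda_{2k}^+$; hence $v_p^{(2k)}(E_{k_j(m)}^{(n)})\ge\mu_j$. For the reverse inequality I would invoke the distinguished forms exhibited above: $T_0$ (if $k\equiv0\bmod 4$) or $T_2$ (if $k\equiv2\bmod 4$) in the case $j=0$, and $T_1$ in the case $j=1$. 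At each such $T_l$ the primitive and the ordinary Fourier coefficient coincide (the determinant admits no nontrivial splitting in (\ref{eq:rel_pri})), and its valuation equals $\mu_j$. Because $\nu_m=v_p^{(2k)}(E_{k_j(m)}^{(n)})$ by definition, this gives $\nu_m=\mu_j$ for all sufficiently large $m$, i.e. $\nu_m\to\mu_j$.

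For part (2), knowing $\nu_m=\mu_j$ lets me substitute $\mu_j$ for $\nu_m$ throughout Propositions~\ref{prop:cong} and \ref{prop:weak}: the former yields that $p^{-\mu_j}E_{k_j(m)}^{(n)}$ is mod $p^{c(m)}$ singular of $p$-rank $2k$, and the latter yields the theta expansion over genera of level dividing $p^{e_m}$. It then remains to discard the genera of level $p^t$ with $t\ge2$. For these the local part $c_{k_j(m)}^{(2k)}(T)$ either contains a factor congruent to $0\bmod p^{b(m)+1}$ (arising from a factor $1-\eta_T(q)q^{\,k-k_j(m)}$ or $1-q^{\,2i+2k-2k_j(m)}$) or, by comparison with a level-$p$ form of the same $\F_p$-rank, acquires an extra power $p^a$ with $a\ge k_j(m)-\tfrac{2k+1}{2}$. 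Combined with the uniform bound $v_p(b_{k_j(m)}^{(2k)}(T))\ge\mu_j$, this forces $v_p\big(p^{-\mu_j}a_{k_j(m)}^{(2k)}({\rm gen}(S))^*\big)\ge b(m)+1$ for such genera; after possibly replacing $c(m)$ by $\min(c(m),b(m)+1)$ (still tending to infinity) these terms vanish modulo $p^{c(m)}$, leaving exactly the level-$p$ sum claimed in part (2).

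For part (3) I would pass to the limit in part (2). The genus theta series $(\Theta^{(n)}_{{\rm gen}(S)})^0$ are independent of $m$, and for each surviving genus the coefficient $p^{-\mu_j}a_{k_j(m)}^{(2k)}({\rm gen}(S))^*$ converges: its Bernoulli part converges by Proposition~\ref{lim}, and its level-$p$ local part converges to the explicit value recorded above. Multiplying the congruence of part (2) by $p^{\mu_j}$ and letting $m\to\infty$, so that $\mu_j+c(m)\to\infty$, produces (\ref{eq:sum}) with $a_j({\rm gen}(S))=\lim_{m\to\infty}a_{k_j(m)}^{(2k)}({\rm gen}(S))^*$; the displayed closed forms for $a_0$ and $a_1$ are then just the products of the two limits. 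Independence of the chosen sequence $k_j(m)$ is immediate, since every limit entering $a_j({\rm gen}(S))$ depends only on $k$, $p$, $j$ and the genus of $S$. I expect the only genuinely delicate step to be the level reduction in part (2): one must coordinate the two auxiliary exponents $b(m)$ and $c(m)$ and confirm that, among the finitely many genera occurring in Proposition~\ref{prop:weak}, precisely those of level dividing $p$ survive the limit.
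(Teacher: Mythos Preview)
Your proposal is correct and tracks the paper's own argument closely: part~(1) via the factorization $a^{*}=b\cdot c$ together with the distinguished matrices $T_0,T_1,T_2$; part~(2) by feeding $\nu_m=\mu_j$ into Propositions~\ref{prop:cong} and \ref{prop:weak} and invoking the level-reduction from Subsection~\ref{local_part}; and part~(3) by passing to the limit using the explicit local values. One minor point of precision: in your level-reduction step the ``either'' branch (the Euler-type factor $1-\eta_T(q)\,q^{\,k-k_j(m)}$) is never actually operative, since the genera appearing in Proposition~\ref{prop:weak} have $\det(2S)$ a pure power of $p$ and that congruence requires $q\neq p$; only the comparison-with-level-$p$ argument (your ``or'' branch) does the work here, exactly as in the paper.
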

\begin{Rem}
\begin{enumerate} \setlength{\itemsep}{-3pt}
\item
Concerning the regularity condition on $p$, we note that it is not necessary for the property (1); 
for properties the actual condition we need is Assumption \ref{Assm:NV}. 
The regularity condition on $p$ is not necessary for the property (1). 
For the regularity condition on $p$ in (2), (3), the actual condition we need is Assumption \ref{Assm:NV}. 
\item
For any $S\in \Lambda_{2k}^+$ with ${\rm level}(S)\mid p$, there exists $U\in {\rm GL}_{2k}(\Z_p)$ such that
\[S[U]={\rm diag}(1,\cdots ,1 , \underbrace{p,\cdots , p}_{s})\quad \text{or} \quad {\rm diag}(1,\cdots ,1 , \gamma , \underbrace{p,\cdots , p, p\gamma '}_{s}),\] 
where $\gamma $, $\gamma ' \in \Z_p^{\times }-(\Z_p^{\times })^2$, and $(\Z_p^{\times })^2$ is the set of all square elements of $\Z_p^{\times }$. 
In particular we have $0\le s\le 2k$ for $s$ with $\det (2S)=p^s$. 
Also, one should observe that NOT all cases arise, e.g. $2k=4$, $s=0$ does not exist. 
\end{enumerate}
\end{Rem}

\section{Characterization of $\widetilde{E}_{{\boldsymbol k}_j}^{(n)}$ by $U(p)$-operator}
\label{sec:3.5} 
The goal of this section is to prove Corollary \ref{cor:eisen} (2).

Let $F\in M_k(\Gamma _n)_{\Z_{(p)}}$. 
Let $T_n(p)$ be the Hecke operator defined as 
\begin{align}
\label{eq:hecke}
F\mid T_n(p):=p^{nk-\frac{1}{2}n(n+1)}\sum _{\smat{A}{B}{0_n}{D}\in \Gamma_n \backslash \Gamma _n\smat{1_n}{0_n}{0_n}{p1_n}\Gamma _n}\det D^{-k}\cdot F((AZ+B)D^{-1}). 
\end{align}
We define the operator $U(p)$ as usual by
\begin{align}
\label{eq:Up}
F=\sum _Ta_F(T)q^T\longmapsto F\mid  U(p)=\sum _Ta_F(pT)q^T. 
\end{align} 

\begin{Lem}
\label{lem:eigen_Tp}
Let $n$, $k$ be positive integers with $k\ge n$ and $p$ a prime. 
Then we can write $T_n(p):M_k(\Gamma _n)\to M_k(\Gamma _n)$ as the form $T_n(p)=U(p)+p^{k-n}\widetilde{V}(p)$ 
with an operator $\widetilde{V}(p)$, which is some linear combination of operators preserving integrality of the Fourier coefficients. 
\end{Lem}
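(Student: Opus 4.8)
The plan is to compute the action of the Hecke operator $T_n(p)$ on Fourier coefficients explicitly, using the coset decomposition $\Gamma_n \smat{1_n}{0_n}{0_n}{p1_n} \Gamma_n = \bigsqcup \Gamma_n \smat{A}{B}{0_n}{D}$. The standard coset representatives $\smat{A}{B}{0_n}{D}$ with $\trn A D = p\cdot 1_n$ can be parametrized (via the elementary divisor theory of $\Z/p\Z$) so that $D$ ranges over a set of upper-triangular-type matrices whose determinant is $p^{\,0}, p^{1}, \dots, p^{n}$. The term where $D = p\cdot 1_n$ (and correspondingly $A = 1_n$) contributes exactly the $U(p)$-operator: tracing through \eqref{eq:hecke} with the normalizing power $p^{nk - \frac12 n(n+1)}$, one checks that this single coset produces $\sum_T a_F(pT)q^T$, which is $F\mid U(p)$ by the definition \eqref{eq:Up}.

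First I would fix the coset decomposition and isolate this distinguished representative. Then I would group the remaining cosets by $\det D = p^\ell$ with $1 \le \ell \le n$, and show that each contributes a term carrying a factor of $p^{k-n}$ or higher. Concretely, a representative with $\det D = p^\ell$ contributes, after the normalization $p^{nk - \frac12 n(n+1)}$ and the factor $\det D^{-k} = p^{-\ell k}$, a power of $p$ whose exponent can be bounded below by $k-n$; the cleanest way is to observe that for $\ell \le n-1$ the elementary-divisor structure forces the minimal $p$-exponent to be at least $k - n$, and these terms act on the Fourier expansion by sending $q^T \mapsto q^{T[\cdot]}$ composed with averaging over residues $B \bmod D$, an operation that manifestly preserves $\Z_{(p)}$-integrality of coefficients. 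Collecting all the non-$U(p)$ cosets into a single operator and factoring out $p^{k-n}$ gives $T_n(p) = U(p) + p^{k-n}\widetilde{V}(p)$, where $\widetilde{V}(p)$ is the resulting $\Z_{(p)}$-linear combination.

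The hypothesis $k \ge n$ is precisely what guarantees $p^{k-n}$ is a nonnegative power of $p$, so that $\widetilde{V}(p)$ genuinely has the stated integrality property and the decomposition is a congruence-meaningful statement modulo powers of $p$. I would verify that every coset other than the distinguished one does contribute an exponent $\ge k-n$ (not strictly larger), which fixes $p^{k-n}$ as the correct common factor; the representatives with $\det D = p$ realize the exact exponent $k-n$.

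The main obstacle I expect is the bookkeeping of the $p$-powers across the coset representatives: one must track the normalization exponent $nk - \frac12 n(n+1)$ against $\det D^{-k}$ and the number of residue classes $B \bmod D$ summed over (which contributes $\det D$ worth of terms, i.e.\ $p^\ell$), and confirm that the net $p$-valuation of each non-trivial coset's contribution is bounded below by $k-n$ uniformly. This amounts to a careful but routine computation with the standard Siegel Hecke coset representatives; the integrality of $\widetilde{V}(p)$ then follows because each constituent operator is, up to the factored power of $p$, an averaging map with $\Z$-coefficients on the Fourier side.
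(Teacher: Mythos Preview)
Your overall strategy coincides with the paper's: isolate the coset $A=1_n$, $D=p\cdot 1_n$ as the $U(p)$-part, then bound the remaining cosets by $p^{k-n}$. The difference lies entirely in how that bound is obtained. The paper does \emph{not} carry out the bookkeeping you describe; instead it invokes a result of Mizumoto \cite{Mizu} which says that the block of cosets with $A\sim \mathrm{diag}(a_1,\dots,a_n)$ (elementary divisors, $a_j\mid a_{j-1}$) contributes Fourier coefficients divisible by $\prod_{j=1}^n a_j^{\,k-n+j-1}$. Since $A\neq 1_n$ forces $p\mid a_1$, this product is divisible by $p^{k-n}$, and the lemma follows immediately.

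Your proposed direct count does not work as written. First, the index range is off: the remaining cosets have $\det D=p^{\ell}$ with $0\le \ell\le n-1$ (the case $\ell=n$ is $U(p)$). Second, and more seriously, the naive tally ``normalization $p^{nk-n(n+1)/2}$ times $\det D^{-k}=p^{-\ell k}$ times $|\{B\}|=\det D=p^{\ell}$'' is incorrect on two counts: the number of $B$-residues attached to a fixed $(A,D)$ is \emph{not} $\det D$ in general (already for $A=1_n$, $D=p\cdot 1_n$ one has $p^{n(n+1)/2}$ residues, not $p^n$), and even granting some $B$-factor, the crude exponent $(n-\ell)k-n(n+1)/2$ falls below $k-n$ for $\ell$ close to $n-1$ once $n\ge 2$. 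The missing divisibility is genuinely nontrivial: it comes from the interaction of the $B$-sum with the half-integrality constraint on the transformed index $T$, and this is precisely the content of Mizumoto's lemma. Third, the extremal case is the opposite of what you state: the exponent $k-n$ is attained when $\det A=p$ (i.e.\ $a_1=p$, $a_2=\cdots=a_n=1$), hence $\det D=p^{n-1}$, not $\det D=p$.

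So your plan is salvageable, but the ``routine computation'' you anticipate is essentially a reproof of Mizumoto's integrality statement; you should either cite it as the paper does, or be prepared for a substantially more careful analysis of the coset action on Fourier coefficients than your sketch indicates.
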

\begin{proof}
Note that $A{}^tD=p1_n$ for $A$, $D$ in the index of summation in (\ref{eq:hecke}). 
We separate the summation into two parts; the one is of $A=1_n$ and hence $D=p1_n$, the other is of $A\neq 1_n$ and hence $\det A=p^i$ for some $i$ with $1\le i\le n$.  
Then we have 
\begin{align*}
F\mid T_n(p)&:=p^{nk-\frac{1}{2}n(n+1)}\sum _{\smat{1_n}{B}{0_n}{p\cdot 1_n}}p^{-nk}F\left(\frac{Z+B}{p}\right) \\
&+ p^{nk-\frac{n(n+1)}{2}}\sum _{\smat{A}{B}{0_n}{D_A}}\det D_A^{-k}\cdot F((AZ+B)D_A^{-1})
\end{align*}
with $D_A:=p\cdot {}^tA^{-1}$. 

The first part is just $F\mid U(p)$, namely we have 
\[F\mid U(p)=p^{nk-\frac{1}{2}n(n+1)}\sum _{\smat{1_n}{B}{0_n}{p
1_n}}p^{-nk}F\left(\frac{Z+B}{p}\right). \]
The second part becomes  
\begin{align}
\nonumber
&p^{nk-\frac{n(n+1)}{2}}\sum _{\smat{A}{B}{0_n}{D_A}}\det D_A^{-k}\cdot F((AZ+B)D_A^{-1})\\ 
\label{eq:2}
&=p^{nk-\frac{n(n+1)}{2}}\sum _{\substack{1\le i \le n}} \sum _{\substack{a_1,\cdots, a_n \\ a_j \mid a_{j-1} \\ \prod a_j =p^i}}\sum _{A\sim {\rm diag}(a_1,\cdots,a_n)}\det D_A^{-k}\cdot F((AZ+B)D_A^{-1}), 
\end{align}
where 
$A$ in the third summation of (\ref{eq:2}) runs over matrices such that $U A V={\rm diag}(a_1,\cdots,a_n)$ for some $U$, $V\in {\rm GL}_n(\Z)$ (we denoted this situation as $A\sim {\rm diag}(a_1,\cdots,a_n)$). 

Mizumoto \cite{Mizu} (before Remark 1.3) proved that
$\prod _{j=1}^na_j^{k-n+j-1}$ (a power of $p$) divides the Fourier expansion of the part (\ref{eq:2}).  
Since $p\mid a_1$, we have $v_p\left(\prod _{j=1}^na_j^{k-n+j-1}\right)\ge k-n$. 
This implies $v_p((\ref{eq:2}))\ge k-n$. 
This completes the proof. 
\end{proof}
\begin{Lem}
\label{lem:unit_eigen}  
Let $n$ be a positive integer, $k$ a positive even integer with $k>n+1$, and $p$ a prime. 
Let $\lambda _{n,k}(p)$ be the eigenvalue of $E^{(n)}_{k}$ for the Hecke operator $T_n(p)$. 
Then we have 
\[\lambda _{n,k}(p)=p^{nk-\frac{n(n+1)}{2}}+\lambda _{n-1,k}(p), \]  
and in particular $\lambda _{n,k}(p) \in \Z_{(p)}^\times $.  
\end{Lem}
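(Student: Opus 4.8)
Looking at Lemma 1.19, I need to prove a recursion for the Hecke eigenvalue $\lambda_{n,k}(p)$ of the Siegel--Eisenstein series $E_k^{(n)}$ for $T_n(p)$, and deduce that it's a $p$-adic unit. Let me think about how to prove this.

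The statement is: $\lambda_{n,k}(p) = p^{nk - n(n+1)/2} + \lambda_{n-1,k}(p)$, and consequently $\lambda_{n,k}(p) \in \Z_{(p)}^\times$.

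Key observations: $E_k^{(n)}$ is a Hecke eigenform. The constant term is 1. The Siegel $\Phi$-operator relates $E_k^{(n)}$ to $E_k^{(n-1)}$. The previous Lemma 1.18 decomposes $T_n(p) = U(p) + p^{k-n}\tilde{V}(p)$.

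Let me think about the approach. The cleanest way is probably to use the action on the constant term or on the lowest-rank Fourier coefficients, combined with the $\Phi$-operator.
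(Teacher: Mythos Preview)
Your proposal is not yet a proof---it is a list of potentially relevant ingredients without a concrete argument. The key step you are missing is the \emph{\v{Z}arkovskaya commutation relation} between the Siegel $\Phi$-operator and the Hecke operator $T_n(p)$. This is exactly what the paper uses: in the paper's normalization one has
\[
\Phi\bigl(F\mid T_n(p)\bigr) \;=\; p^{\,nk-\frac{n(n+1)}{2}}\,\Phi(F) \;+\; \Phi(F)\mid T_{n-1}(p),
\]
which, applied to $F=E_k^{(n)}$ together with $\Phi(E_k^{(n)})=E_k^{(n-1)}$, immediately yields the recursion $\lambda_{n,k}(p)=p^{\,nk-n(n+1)/2}+\lambda_{n-1,k}(p)$. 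The unit property then follows by induction from $\lambda_{1,k}(p)=1+p^{k-1}\in\Z_{(p)}^\times$, since each added term $p^{\,nk-n(n+1)/2}$ is divisible by $p$ for $n\ge 1$.

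Two of the tools you list do not get you there on their own. The decomposition $T_n(p)=U(p)+p^{k-n}\widetilde V(p)$ from the preceding lemma only gives a congruence modulo $p^{k-n}$, not an exact identity, so it cannot produce the precise recursion. And ``looking at the constant term'' in principle recovers $\lambda_{n,k}(p)$ as $p^{\,nk-n(n+1)/2}\sum_{\gamma}\det D_\gamma^{-k}$ over coset representatives, but turning that sum into the stated recursion still amounts to organizing the cosets according to how they interact with $\Phi$---i.e.\ you are back to the \v{Z}arkovskaya relation. You should state and invoke that relation explicitly.
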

\begin{proof}
  Let ${\mathcal T}_n(p)$ be the Hecke operator treated as
  Freitag's book \cite{Frei2}. 
Let $F\in M_k(\Gamma _n)$. 
Comparing the definitions, we have $T_n(p)=p^{nk-\frac{n(n+1)}{2}}{\mathcal T}_{n}(p)$. 
On the other hand, Freitag \cite{Frei2} and \v{Z}arkoskaya \cite{Za} proved that 
\[\Phi (F\mid {\mathcal T}_n(p))=\Phi(F)+p^{n-k}\Phi (F)\mid {\mathcal T}_{n-1}(p). \]
Multiplying by $p^{nk-\frac{n(n+1)}{2}}$, we get 
\[\Phi (F\mid T_n(p))=p^{nk-\frac{n(n+1)}{2}}\Phi (F)+\Phi (F)\mid T_{n-1}(p). \]
Applying this formula to $F=E_k^{(n)}$, we have 
\[\lambda _{n,k}(p)=p^{nk-\frac{n(n+1)}{2}}+\lambda _{n-1,k}(p). \] 
Since $\lambda _{1,k}(p)\in \Z_{(p)}^\times $, we get $\lambda _{n,k}(p)\in \Z_{(p)}^\times $ for all $n$ with $n\ge 1$. 
\end{proof}

\begin{proof}[Proof of Corollary \ref{cor:eisen} (2)]
It follows from Lemma \ref{lem:eigen_Tp} 
that 
\[p^{-\mu _j} E_{k_j(m)}^{(n)} \mid U(p) \equiv p^{-\mu _j} E_{k_j(m)}^{(n)} \mid T_{n}(p)=\lambda _{n,k_j(m)}(p)\cdot p^{-\mu _j} E_{k_j(m)}^{(n)} \bmod{p^{c(m)}} \] 
with some $c(m)$ satisfying $c(m)\to \infty $ if $m\to \infty $. 
On the other hand, Lemma \ref{lem:unit_eigen} 
implies that 
\[\lambda _{n,k_j(m)}(p)\equiv \lambda _{1,k_j(m)}(p)=\sum _{0<d\mid p}d^{k_j(m)-1}\equiv 1 \bmod{p^{c'(m)}}\] 
with some $c'(m)$ satisfying $c'(m)\to \infty $ if $m\to \infty $. 
These imply that 
\[p^{-\mu _j} E_{k_j(m)}^{(n)} \mid U(p) \equiv p^{-\mu _j} E_{k_j(m)}^{(n)}  \bmod{p^{\min\{c(m),c'(m)\}}}. \]
Taking a $p$-adic limit, we obtain  
$\widetilde{E}_{{\boldsymbol k}_j}^{(n)}\mid U(p)=\widetilde{E}_{{\boldsymbol k}_j}^{(n)}$. 
\end{proof}

\section{Another way to show that level $p$ suffices}
\label{sec:4} 

In Subsection \ref{local_part}, we proved that $e_m$ in Proposition \ref{prop:weak} is $1$ for large $m$ by checking directly
$a_{k_j(m)}^{(2k)}({\rm gen}(S))^*\equiv 0$ mod $p^{c(m)}$ for $p^2\mid {\rm level}(S)$. 
In this section, we introduce another way to prove this. 

\begin{Lem}
\label{lem:U_p}
Let $r$ be a positive even integer and $p$ a prime. 
Let $S\in \Lambda _r^+$ be of level $p^e$. 
Then for any $n$, 
$\theta _S^{(n)}\mid U(p)$ is an integral linear combination of theta series $\theta _{S'}^{(n)}$ with $S'\in \Lambda _r^+$ satisfying $\chi _{S'}=\chi _S$ and ${\rm level}(S') \mid \max\{p,p^{e-1}\}$.  
\end{Lem}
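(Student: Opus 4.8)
The plan is to compute the action of $U(p)$ on $\theta_S^{(n)}$ directly in terms of lattice points and then reinterpret the result as a sum of theta series. Recall that by (\ref{eq:theta}) the Fourier coefficients of $\theta_S^{(n)}$ are the representation numbers $A(S,T)$, and by definition of $U(p)$ in (\ref{eq:Up}) the series $\theta_S^{(n)}\mid U(p)$ has $T$-th Fourier coefficient equal to $A(S,pT)$. So I first want a clean description of $A(S,pT)=\sharp\{X\in\Z^{r,n}\mid S[X]=pT\}$. The natural device is to write $\theta_S^{(n)}\mid U(p)$ via a sum over cosets: one has the standard identity
\begin{align}
\label{eq:Up-theta}
\theta_S^{(n)}\mid U(p)=\frac{1}{p^{\,n(n+1)/2}}\sum_{B}\theta_S^{(n)}\!\left(\tfrac{Z+B}{p}\right),
\end{align}
where $B$ runs over $\sym_n(\Z)/p\,\sym_n(\Z)$; this just isolates the Fourier modes $T$ with $T\in\Lambda_n$ (integral after scaling) and multiplies the corresponding coefficient by the number of representations of $pT$. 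Expanding each $\theta_S^{(n)}((Z+B)/p)$ as a lattice sum, the averaging over $B$ imposes the condition that $S[X]\equiv 0\bmod p$ as a quadratic form, i.e. the columns of $X$ lie in a sublattice determined by the reduction $S\bmod p$.

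Next I would organize the lattice points $X\in\Z^{r,n}$ with $S[X]\in p\Lambda_n$ according to cosets of a suitable $p$-scaled sublattice. Concretely, since $S$ has level $p^e$, the lattice $L=(\Z^r,S)$ and its dual $L^\#$ satisfy $pL^\#\subset L$ when $e=1$, and more generally the reduction mod $p$ controls how $X$ must sit. The key algebraic step is a \emph{transformation of the quadratic form}: the condition $S[X]\equiv 0\bmod p$ together with the rescaling by $p^{-1}$ produces representation numbers of a new form $S'$ obtained from $S$ by a $p$-adic base change of the shape $S'=S[U]$ with $U\in\mathrm{GL}_r(\Q_p)\cap$ (a $p$-integral combination), where $S'$ has Jordan blocks shifted by one power of $p$. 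Using the normal form recorded in the second Remark after Theorem \ref{thm:coin} — every level-$p^e$ form is $\Z_p$-equivalent to a diagonal form with entries among $1,\gamma,p,p\gamma',\dots$ — one sees that applying $U(p)$ lowers the top power of $p$ in the Jordan decomposition by one, so the resulting forms $S'$ have level dividing $\max\{p,p^{e-1}\}$. The invariance $\chi_{S'}=\chi_S$ follows because the character (\ref{eq:char}) depends only on $(-1)^{r/2}\det(2S)$ up to squares, and the $U(p)$-operation multiplies $\det(2S)$ by a square times a power of $p^{2}$ (columns scaled in pairs), preserving the class of $\det(2S)$ modulo squares and modulo the relevant power of $p$.

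I expect the main obstacle to be the bookkeeping in the local-to-global step: one must check that the genuinely global object $\theta_S^{(n)}\mid U(p)$ really decomposes as an \emph{integral} linear combination of honest theta series $\theta_{S'}^{(n)}$ (not merely a formal $q$-expansion), and that the forms $S'$ appearing are positive definite of the same rank $r$ with the claimed level bound. The cleanest route is to prove the Jordan-block statement purely $p$-adically for $S\in\Lambda_r^+$ of level $p^e$, show that the coset decomposition of $X$ matches exactly the lattice points representing $S'[Y]=T$ for these $S'$, and then verify integrality of the coefficients by tracking the index $[L:L']$ of the relevant sublattices (all powers of $p$, absorbed by the $p^{-n(n+1)/2}$ in (\ref{eq:Up-theta})). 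The positivity and the rank are automatic since $U(p)$ does not change $r$ and the $S'$ remain positive definite; the character equality is then a short computation with (\ref{eq:char}).
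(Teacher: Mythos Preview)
Your direct-computation approach is different from the paper's and is in principle viable --- the paper itself remarks that the lemma can be extracted from Evdokimov \cite{Evd} or \cite{Bo-Fu-Sch} along such lines --- but the sketch has a real gap. The assertion that after averaging over $B$ ``the columns of $X$ lie in a sublattice determined by the reduction $S\bmod p$'' is wrong in general: the condition $S[X]\in p\Lambda_n$ says that the columns $x_1,\dots,x_n$ of $X$ satisfy ${}^tx_iSx_j\equiv 0\pmod p$ for all $i,j$, i.e.\ they span a \emph{totally isotropic} subspace of $(\F_p^{\,r},\overline S)$. Once the regular part of $\overline S$ has $\F_p$-rank $\geq 3$ this is not a single sublattice condition, and the rescaling by $p^{-1}$ does not produce one form $S'$ but a whole family, indexed essentially by the isotropic sublattices between $pL$ and $L$. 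Your integrality remark is also off: the factor $p^{-n(n+1)/2}$ in the $U(p)$ formula is cancelled exactly by the exponential sum over $B$ and has nothing left over to absorb lattice indices; integrality of the coefficients has to come from the combinatorics of the isotropic stratification itself. None of this is insurmountable, but it is precisely the substance of the Evdokimov/Eichler-commutation argument and cannot be waved through in a paragraph.

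The paper's proof sidesteps all of this. For $n$ large it invokes the theory of singular modular forms to know that $\theta_S^{(n)}\mid U(p)$ is \emph{some} complex linear combination $\sum_j\alpha(S,S_j)\,\theta_{S_j}^{(n)}$ over class representatives $S_j$ of level dividing $p^e$ and nebentypus $\chi_S$; the level bound ${\rm level}(S_j)\mid\max\{p,p^{e-1}\}$ then comes for free from the general level-lowering property of $U(p)$ recorded in the Remark following the lemma. Integrality is obtained by a short linear-algebra trick: specialize to $n=r$, compare Fourier coefficients at the $S_j$ to get $A(S_i,pS_j)/\epsilon(S_j)=\sum_t\alpha(S_i,S_t)\,A(S_t,S_j)/\epsilon(S_j)$, and observe that with the $S_j$ ordered by increasing determinant the matrix $\bigl(A(S_t,S_j)/\epsilon(S_j)\bigr)_{t,j}$ is integral, upper triangular with $1$'s on the diagonal, hence has integral inverse. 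Commutation of $U(p)$ with the Siegel $\Phi$-operator then propagates the statement to all $n$.
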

One may extract such statement from Evdokimov \cite{Evd} or
from
\cite{Bo-Fu-Sch}, but 
we prefer to give a simple-minded direct proof here. 
Our proof also works for arbitrary levels (not only powers of $p$).
\begin{proof}
Let $\{S_1,\cdots ,S_h\}$ be a set of representatives for ${\rm GL}_r(\Z)$-equivalence classes
in $\Lambda _r^+$ of level dividing $p^e$ with nebentypus equal to $\chi _S$. 
Then 
\[\theta^{(n)} _{S_i}\mid U(p)=\sum_j \alpha (S_i,S_j)\theta^{(n)} _{S_j} \]
holds with certain coefficients $\alpha (S_i,S_j)\in \C$. 
Moreover, by general properties of $U(p)$, we have $\alpha (S_i,S_j)=0$ unless ${\rm level}(S_j)\mid \max\{p,p^{e-1}\}$. 
This follows from the theory of singular modular forms for $n$ large enough and then for arbitrary $n$
by the commutation relation for $U(p)$ and the Siegel $\Phi $-operator. 
We analyze the coefficients $\alpha (S_i,S_j)$ for the case $n=r$. 
For all $i$, $j$, we have 
\[\frac{A(S_i,pS_j)}{\epsilon(S_j)}=\sum _t \alpha (S_i,S_t)\cdot \frac{A(S_t,S_j)}{\epsilon (S_j)}.\] 
We consider this as a system of linear equations for $\alpha (S_i,S_j)$.
The coefficient matrix $\left(\frac{A(S_t,S_j)}{\epsilon (S_j)}\right)_{i,j}$ is upper triangular with integral entries and with diagonal entries equal to one, 
if we choose $S_i$ such that $\det(S_{i+1})\ge \det(S_i)$ for all $i$. 
Then we have $\alpha (S_i,S_j)\in \Z$. 
\end{proof}
\begin{Rem}
We tacitly used the fact that $U(p)$ maps modular forms
for $\Gamma_0^{(n)}(p^e)$ to modular forms for $\Gamma^{(n)}_0(p^{e-1})$ provided that
  $e\geq 2$ and the nebentypus is defined mod $p^{e-1}$.
  This follows in a straightforward way from the equality of double cosets
  $$\Gamma_0^{(n)}(p^e)\cdot\left(\begin{array}{cc} 1_n & 0_n \\
    0_n & p1_n\end{array}\right)\cdot \Gamma_0^{(n)}(p^e)=
\Gamma_0^{(n)}(p^e)\cdot\left(\begin{array}{cc} 1_n & 0_n\\
    0_n & p1_n\end{array}\right)\cdot \Gamma_0^{(n)}(p^{e-1})\qquad (e\geq 2)$$
or by observing that the proof of Lemma 1 in 
Li \cite{Li} also works for degree $n$. 
\end{Rem}
\begin{Thm}
\label{Thm:prec_lev}
Let $n$ be a positive integer, $k$ a positive even integer, and $p$ a prime. 
Suppose that $F\in M_k(\Gamma _n)_{\Z_{(p)}}$ satisfies $F\mid T_n(p)\equiv \epsilon F$ mod $p^m$ for some $\epsilon \in \Z_{(p)}^{\times}$.  
Suppose also that, for some $r\le n$, we have a congruence
\[F\equiv \sum _{\substack{S\in \Lambda _r^+/{\rm GL}_r(\Z)\\ {\rm level}(S)\mid p^{e_m}}} c_S \theta _S^{(n)} \bmod{p^m}.\] 
Then we have $e_m=1$. In other words, we get
\[F\equiv \sum _{\substack{S\in \Lambda _r^+/{\rm GL}_r(\Z)\\ {\rm level}(S)\mid p}} c_S \theta _S^{(n)} \bmod{p^m}.\] 
\end{Thm}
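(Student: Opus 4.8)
The plan is to argue by downward induction on the level exponent $e_m$, using the operator $U(p)$ as a device that strictly lowers the level of a theta expansion. The whole argument rests on two inputs already available: the decomposition $T_n(p)=U(p)+p^{k-n}\widetilde{V}(p)$ from Lemma~\ref{lem:eigen_Tp}, and the fact from Lemma~\ref{lem:U_p} that $\theta_S^{(n)}\mid U(p)$ is an integral combination of theta series of level dividing $\max\{p,p^{e-1}\}$ whenever $\mathrm{level}(S)\mid p^e$.

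First I would convert the $T_n(p)$-eigencongruence into a $U(p)$-eigencongruence. Since $\widetilde{V}(p)$ preserves $p$-integrality, the Fourier coefficients of $F\mid\widetilde{V}(p)$ lie in $\Z_{(p)}$, so $p^{k-n}\,F\mid\widetilde{V}(p)\equiv 0\bmod p^{k-n}$. Hence $F\mid U(p)\equiv F\mid T_n(p)\equiv\epsilon F\bmod p^{\min\{m,\,k-n\}}$; in the regime where the theorem is applied the weight $k=k_j(m)$ is large (indeed $k-n\ge m$ for large $m$), so this reads $F\mid U(p)\equiv\epsilon F\bmod p^m$. This is the one place where the size of $k$ enters, and it is the step I expect to require the most care to state correctly.

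Next I would run the induction. Suppose $e_m\ge 2$. Applying $U(p)$ to the given congruence and using that $U(p)$ respects congruences mod $p^m$ (it merely restricts Fourier coefficients to arguments $pT$), I get
\[F\mid U(p)\equiv\sum_{\mathrm{level}(S)\mid p^{e_m}}c_S\,\theta_S^{(n)}\mid U(p)\bmod p^m.\]
By Lemma~\ref{lem:U_p} every $\theta_S^{(n)}\mid U(p)$ on the right is an integral combination of theta series of level dividing $p^{e_m-1}$ (as $\max\{p,p^{e_m-1}\}=p^{e_m-1}$ for $e_m\ge 2$), with new coefficients again in $\Z_{(p)}$. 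Combining with $F\mid U(p)\equiv\epsilon F\bmod p^m$ and multiplying by the unit $\epsilon^{-1}\in\Z_{(p)}$, I obtain
\[F\equiv\sum_{\mathrm{level}(S')\mid p^{e_m-1}}c'_{S'}\,\theta_{S'}^{(n)}\bmod p^m,\qquad c'_{S'}\in\Z_{(p)},\]
so the level exponent has dropped by one. Iterating lowers $e_m$ one unit at a time, and the process halts exactly at $e_m=1$, where $\max\{p,p^{e_m-1}\}=p$ and $U(p)$ no longer decreases the level; this yields the desired expansion with $\mathrm{level}(S)\mid p$.

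The main obstacle is the first step: one must guarantee that passing from $T_n(p)$ to $U(p)$ loses nothing mod $p^m$, which is precisely why the factor $p^{k-n}$ (equivalently, the largeness of $k$ relative to $n$ and $m$) is needed. Invertibility of $\epsilon$ is what lets the lowered-level expansion be transported back from $F\mid U(p)$ to $F$, and the integrality built into Lemma~\ref{lem:U_p} is what keeps all coefficients in $\Z_{(p)}$ throughout the induction.
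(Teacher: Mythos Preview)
Your proof is correct and follows essentially the same approach as the paper: iteratively apply $U(p)$ (via Lemma~\ref{lem:U_p}) to lower the level of the theta expansion, using the $T_n(p)$-eigencongruence together with Lemma~\ref{lem:eigen_Tp} to transport the result back to $F$ at each step. The paper compresses the whole iteration into the single line $\epsilon^N F \equiv F\mid T_n(p)^N \equiv \sum_{\mathrm{level}(S)\mid p} d_S\,\theta_S^{(n)} \bmod p^m$, leaving implicit the passage from $T_n(p)$ to $U(p)$ that you correctly flag as requiring $k-n\geq m$ (a hypothesis not stated in the theorem but always satisfied in the intended application, where $k=k_j(m)\to\infty$).
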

\begin{proof}
By Lemma \ref{lem:U_p},
$\theta _S^{(n)}\mid U(p)$ is integral linear combination of $\theta ^{(n)}_{T}$ with ${\rm level}(T)\mid {\rm level}(S)$ but ${\rm level}(T)\neq {\rm level}(S)$ if $p^2 \mid {\rm level}(S)$. 
Then for some $N$, we have 
\[\epsilon ^N F\equiv F\mid T_n(p)^N\equiv \sum _{\substack{S\in \Lambda _r^+/{\rm GL}_r(\Z)\\ {\rm level}(S)\mid p}} d_S \theta _S^{(n)} \bmod{p^m} \]
with $\epsilon ^N \in \Z_{(p)}^\times$. Therefore we obtain the assertion. 
\end{proof}
By Lemma \ref{lem:unit_eigen}, we can apply Theorem \ref{Thm:prec_lev} as $F=p^{-\nu _m}E_{k_j(m)}^{(n)}$ in the situation of Proposition \ref{prop:weak}. Hence $e_m$ in Proposition \ref{prop:weak} is $1$.
\section*{Acknowledgment}
The authors would like to thank Professors K.~Gunji and R.~Schulze-Pillot for their kind information about the multiplicity for $U(p)$-eigenvalue $1$ and Siegel's main theorem, which are described in Remark \ref{rem:multi} (3). 
The authors would like to thank also the referee for valuable comments. This work was supported by JSPS KAKENHI Grant Number 22K03259.


\section*{Conflict of interest statement}
On behalf of all authors, the corresponding author states that there is no conflict of interest.

\providecommand{\bysame}{\leavevmode\hbox to3em{\hrulefill}\thinspace}
\providecommand{\MR}{\relax\ifhmode\unskip\space\fi MR }
\providecommand{\MRhref}[2]{%
  \href{http://www.ams.org/mathscinet-getitem?mr=#1}{#2}
}
\providecommand{\href}[2]{#2}

\begin{flushleft}
Siegfried B\"ocherer\\
Kunzenhof 4B \\
79177 Freiburg, Germany \\
Email: boecherer@t-online.de
\end{flushleft}

\begin{flushleft}
  Toshiyuki Kikuta\\
  Faculty of Information Engineering\\
  Department of Information and Systems Engineering\\
  Fukuoka Institute of Technology\\
  3-30-1 Wajiro-higashi, Higashi-ku, Fukuoka 811-0295, Japan\\
  E-mail: kikuta@fit.ac.jp
\end{flushleft}

\end{document}